\newcommand{\mR}{{\mathbb R}}
\newtheorem{theorem}{Theorem}
\newtheorem{assumption}[theorem]{Assumption}
\newtheorem{example}[theorem]{Example}
\newtheorem{lemma}[theorem]{Lemma}
\newtheorem{proof}[theorem]{Proof}
\newcommand{\bff}{{\mathbf f}}
\newcommand{\bk}{{\mathbf k}}
\newcommand{\bP}{{\mathbf P}}
\newcommand{\bM}{{\mathbf M}}
\newcommand{\bx}{{\mathbf x}}
\newcommand{\brho}{{\boldsymbol \rho}}
\newcommand{\bPsi}{{\boldsymbol \Psi}}
\newcommand{\bC}{{\boldsymbol{\mathcal C}}}
\newcommand{\be}{{\mathbf e}}
\newcommand{\bX}{{\mathbf X}}
\newcommand{\bY}{{\mathbf Y}}
\newcommand{\bg}{{\mathbf g}}
\newcommand{\bA}{{\mathbf A}}
\newcommand{\bB}{{\mathbf B}}
\newcommand{\bu}{{\mathbf u}}
\newcommand{\bL}{{\mathbf L}}
\newcommand{\bd}{{\mathbf d}}
\newcommand{\bD}{{\mathbf D}}
\newcommand{\bz}{{\mathbf z}}
\newcommand{\bI}{{\mathbf I}}
\newcommand{\bR}{{\mathbf R}}
\newcommand{\bs}{{\mathbf s}}
\newcommand{\bH}{{\mathbf H}}
\newcommand{\mP}{{\mathbb P}}
\newcommand{\mU}{{\mathbb U}}
\newcommand{\bc}{{\mathbf c}}
\newcommand{\chen}[1]{\textcolor{cyan}{#1}}
\newtheorem{definition}{Definition}
\newtheorem{remark}{Remark}
\title{Data-Driven Optimal Control via Linear Transfer Operators: A Convex Approach} 
\author{
Joseph Moyalan\\
Department of Mechanical Engineering\\
Clemson University\\
Clemson, SC, USA \\
\texttt{jmoyala@clemson.edu}\\
\And
Hyungjin Choi\\
Sandia  National  Laboratories\\
Albuquerque,  NM,  USA\\
\texttt{hchoi@sandia.gov}\\
\And
Yongxin Chen\\
School  of Aerospace Engineering\\
Georgia Institute of Technology\\
Atlanta, GA, USA\\
\texttt{yongchen@gatech.edu}\\
\And
Umesh Vaidya \thanks{This work was supported by the NSF under grant 2031573, 1932458, and 1942523.}\\
Department of Mechanical Engineering\\
Clemson University\\
Clemson, SC, USA \\
\texttt{uvaidya@clemson.edu}\\
}
\begin{document}

\maketitle


\begin{abstract}  
This paper is concerned with data-driven optimal control of nonlinear systems. We present a convex formulation to the optimal control problem with a discounted cost function. We consider OCP with both positive and negative discount factor. The convex approach relies on lifting nonlinear system dynamics in the space of densities using the linear Perron-Frobenius (P-F) operator. This lifting leads to an infinite-dimensional convex optimization formulation of the optimal control problem. The data-driven approximation of the optimization problem relies on the approximation of the Koopman operator using the polynomial basis function. We write the approximate finite-dimensional optimization problem as 
a polynomial optimization
which is then solved efficiently using a sum-of-squares-based optimization framework. Simulation results are presented to demonstrate the efficacy of the developed data-driven optimal control framework. 


\end{abstract}
          
\keywords{Optimal Control \and  Nonlinear Systems \and Linear Transfer Operators}                          

\section{Introduction}
Data-driven optimal control of a nonlinear system is a problem that has significant interest with applications ranging from vehicle autonomy, robotics to manufacturing and power systems.
The traditional approach to optimal control problem (OCP) relies on solving the Hamilton Jacobi Bellman (HJB) equation \cite{fleming2012deterministic}. The HJB equation is a nonlinear partial differential equation and challenging to solve. Existing algorithms for solving HJB equations rely on iterative scheme \cite{beard1997galerkin,bertsekas2011approximate}. This iterative scheme is also at the heart of the variety of reinforcement learning (RL) algorithms for the data-driven optimal control \cite{sutton2018reinforcement}. In this paper, we present an alternate approach based on the dual formulation of the OCP. This dual approach leads to a convex optimization formulation of the OCP, which can be solved using a single-shot algorithm. This is in contrast to the iterative scheme used for solving the HJB equation.
Furthermore, the iterative algorithm required for solving the HJB equation requires an initial stabilizing controller. Finding stabilizing controller for a nonlinear system is, in general, a nontrivial problem. However, the computational framework for solving the OCP problem in the dual space does not require an initial stabilizing controller. 

The dual formulation to the OCP we present is based on the theory of linear operators, namely the P-F and Koopman operators \cite{Lasota} and is developed in \cite{vaidyaocpconvex}. However, there are differences between the results presented in this paper and \cite{vaidyaocpconvex} as discussed in our contributions. The convex formulation to the OCP in the dual space of densities and occupation measure has been extensively studied in \cite{henrion2013convex,korda2017convergence,lasserre2008nonlinear}. The computational framework in these works relies on moment-based relaxation of the infinite-dimensional optimization problem. In contrast, our proposed computational framework uses data and depends on the linear operator theory for the finite-dimensional approximation of the infinite-dimensional convex optimization problem. The convex formulation for optimal control is also extended to study stochastic OCP, control design with safety constraints, and data-driven stabilization problems \cite{safetyPF,choi2020convex1}. There is a growing body of literature on the use of the Koopman operator for data-driven control, where the control dynamical system is lifted in the space of functions or observables using the Koopman operator \cite{kaiser2021data,huang2018feedback,arbabi2018data,ma2019optimal,korda2020optimal1,mauroy2013spectral,huang2020data}. However, in this paper, we lift the control system using the P-F operator, which is dual to the Koopman operator. Unlike Koopman-based lifting, P-F lifting of the control dynamical system leads to a convex formulation of the OCP \cite{raghunathan2013optimal,vaidya2010nonlinear}. 

The main contributions of this paper are stated as follows. First, we provide a convex formulation to the infinite horizon OCP with discounted cost involving continuous-time dynamics. We consider OCP problems with both positive and negative discount. For the continuous-time OCP, the negative (positive) discount corresponds to the case where the cost function is exponentially decreasing (increasing) with time. There is extensive literature on the OCP with negative discount factor \cite{modares2014linear,modares2016optimal,ghosh1993optimal}. One of the main contributions of this paper is to provide condition for the existence of optimal control problem with a positive discount. The condition arises in the form of a stronger notion of almost everywhere exponential stability \cite{Vaidya_TAC}. Unlike \cite{vaidyaocpconvex}, the computation framework relies on the use of polynomial basis for the approximation of linear Koopman operator using generator Extended Dynamic Mode Decomposition (gEDMD) algorithm \cite{klus2020data}. Hence, we employ sum-of-square (SOS) optimization methods for solving a finite-dimensional optimization problem. The finite-dimensional approximation of the infinite-dimensional optimization problem is written as a semi-definite program (SDP). SOS-based optimization toolbox is then used to solve the SDP in a numerically efficient manner. Existing rigorous results for the convergence analysis of Koopman operator in the limit of data and number of basis functions goes to infinity are leveraged to provide convergence analysis of data-driven optimization problem. Simulation results are presented to verify the efficacy of the developed framework. The results presented in this paper are an extension of our conference paper \cite{moyalan2021sum}. In particular, the data-driven framework for optimal control design and theorems involving OCP with discounted cost function are new to this paper. 

The rest of the paper is structured as follows. In Section \ref{sec:background}, we introduce preliminaries and notations used throughout the paper. The main results of the paper on the convex formulation to OCP are presented in Section \ref{section_main}. The SOS and Koopman-based computation framework for the data-driven approximation of the convex optimization problem is discussed in Section \ref{sec:SOS Control}. Simulation results are presented in Section \ref{sec:examples}. Conclusions are presented in Section \ref{sec:conclusion}.

\section{Preliminaries and Notations}\label{sec:background}
\noindent{\bf Notation:} $\mR^n$ denotes the $n$ dimensional Euclidean space  and $\mR^n_{\geq 0}$ is the positive orthant. Given $\bX\subseteq \mR^n$ and $\bY\subseteq \mR^m$, let ${\cal L}_1(\bX,\bY), {\cal L}_\infty(\bX,\bY)$, and ${\cal C}^k(\bX,\bY)$ denote the space of all real valued integrable functions, essentially bounded functions, and space of $k$ times continuously  differentiable functions mapping from $\bX$ to $\bY$ respectively. If the space $\bY$ is not specified then it is understood that the underlying space is $\mR$. ${\cal B}(\bX)$ denotes the Borel $\sigma$-algebra on $\bX$ and ${\cal M}(\bX)$ is the vector space of real-valued measure on ${\cal B}(\bX)$. $\bs_t(\bx)$ denotes the solution of dynamical system $\dot \bx={\bf f}(\bx)$ starting from initial condition $\bx$.  We will use the notation ${\cal N}_\delta$ to denote the $\delta$ neighborhood of the equilibrium point at the origin for some fixed $\delta>0$ and $\bX_1:=\bX\setminus {\cal N}_\delta$. 
\subsection{Koopman and Perron-Frobenius Operators}
Consider a dynamical system
\begin{align}\dot \bx=\bff(\bx),\;\;\bx\in \bX\subseteq \mathbb{R}^n \label{dyn_sys}\end{align} where the vector field is assumed to be ${\bf f}(\bx)\in {\cal C}^1(\bX,\mR^n)$. There are two different ways of linearly lifting the finite dimensional nonlinear dynamics from state space to infinite dimension space of functions, Koopman and Perron-Frobenius operators.   

\noindent{\bf Koopman Operator:} $\mathbb{K}_t :{\cal L}_\infty(\bX)\to {\cal L}_\infty(\bX)$ for dynamical system~\eqref{dyn_sys} is defined as 
\[[\mathbb{K}_t \varphi](\bx)=\varphi(\bs_t(\bx)),\;\;\varphi\in {\cal L}_\infty. 
\]
The infinitesimal generator for the Koopman operator is
\begin{equation}
\lim_{t\to 0}\frac{\mathbb{K}_t\varphi-\varphi}{t}=\bff(\bx)\cdot \nabla \varphi(\bx)=:{\cal K}_{\bff} \varphi. \label{K_generator}
\end{equation}
\noindent{\bf Perron-Frobenius Operator:} $\mathbb{P}_t:{\cal L}_1(\bX)\to {\cal L}_1(\bX)$ for  system~\eqref{dyn_sys} is defined as
\begin{equation}[\mathbb{P}_t \psi](\bx)=\psi(\bs_{-t}(\bx))\left|\frac{\partial \bs_{-t}(\bx) }{\partial \bx}\right|,\;\;\psi\in {\cal L}_1,\label{pf-operator} 
\end{equation}
where $\left|\cdot \right|$ stands for the determinant. The infinitesimal generator for the P-F operator is given by 
\begin{align}
\lim_{t\to 0}\frac{\mathbb{P}_t\psi-\psi}{t}=-\nabla \cdot (\bff(\bx) \psi(\bx))=: {\cal P}_{\bff}\psi. \label{PF_generator}
\end{align}
These two operators are dual to each other where the duality is expressed as
\begin{align*}
\int_{\mathbb{R}^n}[\mathbb{K}_t \varphi](\bx)\psi(\bx)d\bx=
\int_{\mathbb{R}^n}[\mathbb{P}_t \psi](\bx)\varphi(\bx)d\bx.
\end{align*}

\subsection{Sum of squares} \label{sec:SOS}
Sum of squares (SOS) optimization \cite{Topcu_10,Pablo_03_SDP,parrilo2003minimizing,Pablo_2000} is a relaxation of positive polynomial constraints appearing in polynomial optimization problems. SOS polynomials are in a set of polynomials that can be described as a finite linear combination of monomials, i.e., $p = \sum_{i=1}^\ell d_i p_i^2$ where $p$ is a SOS polynomial, $p_i$ are polynomials, and $d_i$ are nonnegative coefficients. Hence, SOS is a sufficient condition for the nonnegativity of a polynomial. Thus SOS relaxation provides a lower bound on the minimization problems of polynomial optimizations. Using the SOS relaxation, a large class of polynomial optimization problems with positive constraints can be formulated as SOS optimization as
\begin{align} \label{eq:SOSOPT}
\begin{split}
    \min_{\bd} \, \mathbf{w}^\top \mathbf{d} \,\,\, \mathrm{s.t.} \,\,\, p_s(\bx,\bd) \in \Sigma[\bx], \, p_e(\bx;\bd) = 0,
\end{split}
\end{align}
where $\Sigma[\bx]$ denotes SOS set, $\mathbf{w}$ is weighting coefficient, $p_s$ and $p_e$ are polynomials parametrized by coefficients $\bd$. The problem in~\eqref{eq:SOSOPT} can be translated into a Semidefinite Programming (SDP)~\cite{Pablo_03_SDP, Laurent_2009}. There are readily available SOS optimization packages such as SOSTOOLS~\cite{sostools_Parrilo} and SOSOPT~\cite{Seiler_2013_SOSOPT} for solving~\eqref{eq:SOSOPT}.

\subsection{Almost everywhere  uniform stability and Stabilization}

This section derives results on a stronger notion of stability used in formulating optimal control problem with discounted cost. We first present the notion of, a.e., uniform stability as introduced in \cite{rajaram2010stability}. In the rest of the paper, we will use the following notation.

\begin{definition}\label{definitiona.euniform}[a.e. uniform stability]
The equilibrium point of \eqref{dyn_sys} is said to be a.e. uniform  stable w.r.t. measure $\mu\in {\cal M}(\bX)$ if, for every given $\epsilon$, there exists a time $T(\epsilon)$ such that
\begin{align}
\int_{T(\epsilon)}^\infty \mu (B_t)dt<\epsilon\label{eq_aeunifrom}
\end{align}
where $B_t:=\{\bx \in \bX: \bs_t(\bx)\in B\}$ for any set $B\in {\cal B}(\bX_1)$. 
\end{definition}
The above stability definition essentially means that given any arbitrary set $B$ not containing the origin, the measure of the set of all initial conditions that stay inside $B$ can be made arbitrarily small after a sufficiently long time. Note that the above definition of a.e. uniform stability is stronger than the almost everywhere stability notion as introduced in \cite{Rantzer01} (refer to \cite{rajaram2010stability} for the proof). The following definition of a.e. exponential stability is introduced here and is stronger than the above Definition \ref{definitiona.euniform}.  The following exponential stability definition is a continuous-time counterpart of the discrete-time definition studied in \cite{Vaidya_TAC}.

\begin{definition}
\label{definition_aeexponential}[a.e. exponential stability]
The equilibrium point is said to be almost everywhere  exponential stable w.r.t. measure $\mu$
with rate $\gamma>0$ if there exists a constant $M$ such that 
\begin{align}
\mu(B_t)\leq M e^{-\gamma t}
\label{a.eexponential}
\end{align}
where $B_t:=\{\bx\in \mR^n: \bs_t(\bx)\in B\}$ for any set $B\in {\cal B}(\bX_1)$.
\end{definition}



 In the following, we state theorems providing necessary and sufficient condition for a.e. uniform and a.e uniform exponential stability. These results are proved under the following assumption on the equilibrium point of  (\ref{dyn_sys}). 
\begin{assumption}\label{assume_localstability} We assume that $\bx=0$ is locally stable equilibrium point for the system (\ref{dyn_sys}) with local domain of attraction denoted by $\mathcal{D}$ and let  $0\in {\cal N}_\delta\subset \mathcal{D}$.
\end{assumption}
\begin{theorem}\label{theorem_necc_suffuniform}
The equilibrium point $\bx=0$ for the system (\ref{dyn_sys})  satisfying Assumption \ref{assume_localstability} is almost everywhere uniform stable w.r.t. measure $\mu$ if and only if there exists a density function $\rho(\bx)\in{\cal C}^1(\bX\setminus \{0\},\mR^+)$ which is integrable on $\bX_1$ and satisfies 
\begin{align}
\nabla\cdot ({\bf f}\rho )=h_0\label{steady_pde_uniform}
\end{align}
where $h_0\in {\cal C}^1(\bX)$ is the density function corresponding to the measure $\mu$.
\end{theorem}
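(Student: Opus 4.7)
The plan is to exploit the semigroup identity $\frac{d}{dt}\mathbb{P}_t\psi = \mathcal{P}_{\bff}\mathbb{P}_t\psi = -\nabla\cdot(\bff\,\mathbb{P}_t\psi)$ from \eqref{PF_generator}, together with the pushforward property of the P-F operator
\begin{equation*}
\int_B [\mathbb{P}_t \psi](\bx)\, d\bx \;=\; \int_{B_t} \psi(\bx)\, d\bx, \qquad B_t=\{\bx:\bs_t(\bx)\in B\},
\end{equation*}
which in particular gives $\int_B [\mathbb{P}_t h_0]\, d\bx = \mu(B_t)$. These two identities are the bridge between the stationary PDE \eqref{steady_pde_uniform} and the tail-integral condition \eqref{eq_aeunifrom}.

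For the sufficiency direction ($\rho$ exists $\Rightarrow$ a.e.\ uniform stability), the idea is to apply $\mathbb{P}_t$ to the PDE. Since $\mathcal{P}_{\bff}\rho = -h_0$, the semigroup obeys $\frac{d}{dt}\mathbb{P}_t\rho = \mathbb{P}_t\mathcal{P}_{\bff}\rho = -\mathbb{P}_t h_0$. Integrating on $[0,T]$ and then over any Borel set $B\subseteq\bX_1$, the pushforward identity yields
\begin{equation*}
\int_0^T \mu(B_t)\, dt \;=\; \int_B \rho(\bx)\, d\bx \;-\; \int_B [\mathbb{P}_T\rho](\bx)\, d\bx \;\leq\; \int_B \rho(\bx)\, d\bx.
\end{equation*}
Because $\rho\in\mathcal{L}_1(\bX_1)$ and the right-hand side is independent of $T$, the integral $\int_0^\infty \mu(B_t)\, dt$ converges, so its tail can be made smaller than any prescribed $\epsilon$ by choosing $T(\epsilon)$ large enough; this is precisely \eqref{eq_aeunifrom}.

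For the necessity direction (a.e.\ uniform stability $\Rightarrow$ $\rho$ exists), I would construct the density by
\begin{equation*}
\rho(\bx) \;:=\; \int_0^\infty [\mathbb{P}_t h_0](\bx)\, dt,
\end{equation*}
which is the natural Green-function / resolvent candidate. Nonnegativity is immediate from positivity preservation of $\mathbb{P}_t$, and $\mathcal{L}_1$-integrability on $\bX_1$ follows by Fubini together with the stability hypothesis: $\int_{\bX_1}\rho\, d\bx = \int_0^\infty \mu((\bX_1)_t)\, dt < \infty$. Applying the generator formally gives $\mathcal{P}_{\bff}\rho = \int_0^\infty \frac{d}{dt}\mathbb{P}_t h_0\, dt = \mathbb{P}_\infty h_0 - h_0 = -h_0$, which is \eqref{steady_pde_uniform}; the limit $\mathbb{P}_\infty h_0 = 0$ on $\bX_1$ is exactly what a.e.\ uniform stability provides (mass is absorbed into $\mathcal{N}_\delta$ and hence leaves every $B\in\mathcal{B}(\bX_1)$ in an integrable fashion).

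The main obstacles will be technical rather than conceptual. First, the $\mathcal{C}^1$ regularity of $\rho$ on $\bX\setminus\{0\}$ must be justified by differentiating under the integral sign; this uses smoothness of the flow $\bs_t$ (guaranteed by $\bff\in\mathcal{C}^1$) together with a local compactness argument on $\bX_1$ away from the origin, where Assumption~\ref{assume_localstability} allows trajectories entering $\mathcal{N}_\delta\subset\mathcal{D}$ to be discarded. Second, interchanging $\mathcal{P}_{\bff}$ with the improper time integral requires justifying a limit as $T\to\infty$; this is where the local stability assumption again enters, ensuring $[\mathbb{P}_T h_0]\to 0$ in $\mathcal{L}_1(\bX_1)$ since trajectories pool into $\mathcal{N}_\delta$. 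The singularity of $\rho$ at $\bx=0$ (where the accumulated density can blow up) is precisely why the density is only asserted to lie in $\mathcal{C}^1(\bX\setminus\{0\},\mathbb{R}^+)$ and to be integrable on $\bX_1$ rather than on all of $\bX$.
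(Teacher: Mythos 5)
Your argument is correct and is essentially the proof the paper relies on: the paper itself defers to \cite[Theorem 13]{rajaram2010stability}, and both that argument and the paper's own Appendix proof of the discounted analogue (Theorem \ref{theorem_maingeometric}) use exactly your ingredients — the resolvent construction $\rho(\bx)=\int_0^\infty[\mathbb{P}_t h_0](\bx)\,dt$, the pushforward identity $\int_B[\mathbb{P}_t h_0]\,d\bx=\mu(B_t)$, and a Barbalat-type limit to remove the boundary term $\lim_{t\to\infty}[\mathbb{P}_t h_0]$. One small touch-up on sufficiency: Definition \ref{definitiona.euniform} asks for a single $T(\epsilon)$ valid for \emph{all} $B\in{\cal B}(\bX_1)$, which you obtain immediately from your own bound by taking $B=\bX_1$ and noting $\mu(B_t)\le\mu((\bX_1)_t)$ for every $B\subseteq\bX_1$, so the tail estimate is independent of $B$.
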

Refer to \cite[Theorem 13]{rajaram2010stability} for the proof.

\section{Convex Formulation of Optimal Control Problem}\label{section_main}
In this section we briefly summarize the main results from \cite{vaidyaocpconvex} on the convex formulation of the optimal control problem. Consider a control affine system of the form 
\begin{align} \label{cont_syst1}
\dot \bx=\bar {\bf f}(\bx)+{\bf g}(\bx)\bar \bu,\;\;\bx \in \bX\subseteq\mR^n
\end{align}
where, $\bx$ is the state, $\bar \bu=[\bar u_1,\ldots,\bar u_m]^\top\in \mathbb{R}^m$ is the control input and $\bg(\bx)=(\bg_1(\bx),\ldots,\bg_m(\bx))$. All the vector fields are assumed to belong to ${\cal C}^1(\bX,\mR^n)$.

\begin{remark}
The affine control assumption for a dynamical control system is not restrictive as any non-affine dynamical control system can be converted to control-affine by extending the state space. In particular, consider the control dynamical system of the form
\[\dot \bx=\bff(\bx,\bu)\]
then we can define $\bu$ as a new state and introduce $\tilde \bu$ as another control input to write the above system as the following affine in the input control system
\begin{align}
&\dot \bx=\bff(\bx,\bu)\nonumber,\;\;\;\dot \bu=\tilde \bu
\end{align}
\end{remark}
The following assumption is made on (\ref{cont_syst1}).
\begin{assumption}\label{assume_localstable}
We assume that the linearization of the nonlinear system dynamics (\ref{cont_syst1}) at the origin is stabilizable i.e., the pair $(\frac{\partial \bar {\bf f}}{\partial \bx}(0),\bg(0))$ is stabilizable. 
\end{assumption}
Using the above stabilizability assumption, we can design a local stable controller using data. The detailed procedure for the design of such controller is given in Section \ref{sec:SOS Control}. Let $\bu_\ell$ be the locally stable controller. Defining ${\bf f}(\bx):=\bar {\bf f}(\bx)+\bg(\bx) \bu_\ell$ and $\bu=\bar \bu-\bu_\ell$, we can rewrite control system (\ref{cont_syst1}) as
\begin{align}
\dot \bx={\bf f}(\bx)+{\bf g}(\bx) \bu.\label{cont_syst}
\end{align}
The following is valid for the above dynamical system.  With the control input $\bu=0$, the origin of system (\ref{cont_syst}) is  almost sure asymptotically stable locally in small neighborhood $\mathcal{D}$ of the origin such that ${ B}_\delta \subset {\cal D}$.




Consider the discounted cost OCP of the form
\begin{align}
J^\star(\mu_0)\!\!=\!&\inf_\bu\!\!\int_{\bX_1}\!\!\left[\!\int_0^\infty\!\!\! e^{\gamma t}(q(\bx(t))+ \beta \bu(t)^\top \bR \bu(t)) \;dt\right] d\mu_0\nonumber\\
&{\rm subject\;to\;(\ref{cont_syst})}\label{cost_function}
\end{align}
where $\gamma\in \mR$. The existing literature on OCP with discounted cost address the case where $\gamma$ is negative, i.e., negative discount factor. In this paper, with the stronger notion of a.e. uniform stability with geometric decay, we can address the case of cost with a positive discount. Note that the cost function is a function of initial measure $\mu_0$, and this dependency on $\mu_0$ can be explained as follows.
The cost function can be written as 
\begin{align}J(\mu_0)=\int_{\bX_1}V(\bx)d\mu_0(\bx)\label{costmu}
\end{align}
where, $V(\bx)$ can be written as 
\[V(\bx)=\int_0^\infty e^{\gamma t}(q(\bx(t))+\beta \bu^\top (t)\bR \bu(t))dt\]
with $\bx(\cdot)$ being a trajectory with initial condition $\bx(0)=\bx$.While $V(\bx)$ can be recognized with the familiar cost function used in the formulation of OCP in primal domain, the cost function $J(\mu_0)$ is minimized w.r.t. set of initial condition distributed with initial measure $\mu_0$. In the rest of the paper we assume that the initial measure $\mu_0$ is equivalent to Lebesgue with density function $0<h_0(\bx)\in  {\cal L}_1(\mR^n,\mR_{> 0})\cap {\cal C}^1(\mR^n)$. 
We make the following assumption on the OCP. 
\begin{assumption}\label{assumption_onocp}
 We assume that the state cost function $q: \mR^n\to \mR_{\geq 0}$ is zero at the origin and uniformly bounded away from zero outside the neighborhood ${\cal N}_\delta$ and $\bR>0$. Furthermore, there exists a feedback control for which the cost function in (\ref{cost_function}) is finite and  that the optimal control is feedback in nature, i.e., $\bu^\star=\bk^\star(\bx)$ with the function $\bk^\star$
 being in ${\cal C}^1(\bX,\mR^m)$. 
\end{assumption}
With the assumed feedback form of the optimal control input, the OCP can be written as 
{\small
\begin{eqnarray}
 \inf\limits_{\bk\in {\cal C}^1(\bX)} &\int_{\bX_1}\left[\int_0^\infty e^{\gamma t}(q(\bx(t))+ \beta \bk(\bx(t))^\top \bR \bk(\bx(t)))\;dt\right] d\mu_0\nonumber\\
 {\rm s.t.}&\dot \bx={\bf f}(\bx)+\bg(\bx)\bk(\bx)
\label{ocp_main_discounted}
\end{eqnarray}
}
The following is the main theorem on the OCP with discounted cost function.

\begin{theorem}\label{theorem_maingeometric}
Consider the OCP (\ref{ocp_main_discounted}) with discount factor $\gamma\leq 0$ and assume that the cost function and optimal control satisfy Assumption \ref{assumption_onocp}. Then the OCP (\ref{ocp_main_discounted}) can be written as the following infinite dimensional convex optimization problem 
\begin{eqnarray}
J^\star(\mu_0)&=&\inf_{\rho\in {\cal S},\bar \brho\in {\cal C}^1(\bX_1)} \;\;\; \int_{\bX_1} q(\bx)\rho(\bx)+\beta\frac{\bar \brho(\bx)^\top \bR\bar \brho(\bx)}{\rho} d\bx\nonumber\\
{\rm s.t}.&&\nabla\cdot ({\bf f}\rho +\bg\bar \brho)=\gamma \rho+ h_0
\label{eqn_ocpdiscount_L2}
\end{eqnarray}
where $\bar \brho=(\bar \rho_1,\ldots, \bar\rho_m)$ and ${\cal S}:={\cal L}_1(\bX_1)\cap {\cal C}^1(\bX_1,\mR_{\geq 0})$. The optimal feedback control input is recovered from the solution of the above optimization problem as  
\begin{align}
\bk^\star(\bx)=\frac{\bar \brho^\star(\bx)}{\rho^\star(\bx)}\label{feedback_input}.
\end{align}
Furthermore, if $\gamma=0$, then optimal control $\bk^\star(\bx)$ is a.e. uniformly stabilizing   w.r.t. measure $\mu_0$.
\end{theorem}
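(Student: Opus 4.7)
The plan is to lift the closed-loop dynamics to the space of densities via the P-F operator, treat the discounted occupation density as the primary decision variable, and then use the change of variable $\bar\brho = \bk\rho$ to convexify the objective, which is the standard trick that turns a bilinear OCP in $(\rho,\bk)$ into a convex program in $(\rho,\bar\brho)$.

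First I would fix a candidate feedback $\bk\in{\cal C}^1(\bX)$ and let $\bff_{\bk}:=\bff+\bg\bk$ be the closed-loop vector field with P-F semigroup $\mathbb{P}_t^{\bk}$ and generator ${\cal P}_{\bff_{\bk}}\psi = -\nabla\cdot(\bff_{\bk}\psi)$. Define the discounted occupation density
\begin{align*}
\rho(\bx) := \int_0^\infty e^{\gamma t}[\mathbb{P}_t^{\bk} h_0](\bx)\,dt,
\end{align*}
which is finite for $\gamma\le 0$ under Assumption \ref{assumption_onocp}. Applying the generator to both sides (formally differentiating the semigroup in $t$ under the integral) gives ${\cal P}_{\bff_{\bk}}\rho + \gamma\rho + h_0 = 0$, i.e.
\begin{align*}
\nabla\cdot(\bff_{\bk}\rho) = \gamma\rho + h_0,
\end{align*}
which is exactly the PDE constraint in \eqref{eqn_ocpdiscount_L2} once we substitute $\bar\brho=\bk\rho$. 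Conversely, by the $\mathbb{K}_t$--$\mathbb{P}_t$ duality recalled in the excerpt, a nonnegative $\rho\in{\cal S}$ satisfying this PDE must coincide with the occupation density, so primal trajectories and dual densities are in one-to-one correspondence for each fixed $\bk$.

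Next I would rewrite the cost by a Fubini/duality argument. Using the Koopman identity $\int\varphi(\bs_t(\bx))h_0(\bx)d\bx = \int\varphi(\bx)[\mathbb{P}_t h_0](\bx)d\bx$ applied to $\varphi=q+\beta\bk^\top\bR\bk$, the inner time integral in \eqref{ocp_main_discounted} yields
\begin{align*}
J(\mu_0) = \int_{\bX_1} \bigl(q(\bx) + \beta\bk(\bx)^\top\bR\bk(\bx)\bigr)\rho(\bx)\,d\bx.
\end{align*}
With $\bar\brho=\bk\rho$ we have $\bk^\top\bR\bk\,\rho = \bar\brho^\top\bR\bar\brho/\rho$, recovering the objective in \eqref{eqn_ocpdiscount_L2}; this step simultaneously linearizes the PDE in $(\rho,\bar\brho)$ and makes the $\bu$--quadratic term jointly convex in $(\rho,\bar\brho)$ (as the perspective of $\bu\mapsto\bu^\top\bR\bu$). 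Thus the reformulated problem is a lower bound on \eqref{ocp_main_discounted}; the reverse inequality is obtained by decoding $\bk^\star=\bar\brho^\star/\rho^\star$ from a minimizer, which is well-defined wherever $\rho^\star>0$ (outside a $\mu_0$-null set by strict positivity of $h_0$) and $\bk^\star\in{\cal C}^1$ under Assumption \ref{assumption_onocp}. This establishes \eqref{feedback_input} and the equivalence.

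For the final clause, set $\gamma=0$. Then the PDE becomes $\nabla\cdot(\bff_{\bk^\star}\rho^\star)=h_0$ with $\rho^\star\in{\cal C}^1(\bX\setminus\{0\},\mR_{\ge 0})\cap{\cal L}_1(\bX_1)$ and $h_0$ the density of $\mu_0$. This is precisely the hypothesis of Theorem \ref{theorem_necc_suffuniform}, whose sufficiency direction immediately yields a.e. uniform stability of the closed-loop equilibrium w.r.t. $\mu_0$. The main obstacle in carrying the program through rigorously is the interchange of $\inf$ over $\bk$ with the dual parametrization in $(\rho,\bar\brho)$: one must verify that every admissible $(\rho,\bar\brho)$ in ${\cal S}\times{\cal C}^1(\bX_1)$ with $\rho>0$ arises from a genuine feedback $\bk=\bar\brho/\rho$ for which the original cost is finite, and that the finiteness/integrability assumed in Assumption \ref{assumption_onocp} survives under the change of variable — this is where the local stabilizing preprocessing (yielding \eqref{cont_syst}) and the positivity of $h_0$ are essential.
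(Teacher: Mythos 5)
Your route is the same one the paper takes: lift the closed loop with the P-F semigroup, define the discounted occupation density $\rho(\bx)=\int_0^\infty e^{\gamma t}[\mP_t^{\bk}h_0](\bx)\,dt$, rewrite the cost by Koopman--P-F duality, convexify with $\bar\brho=\bk\rho$, and invoke Theorem \ref{theorem_necc_suffuniform} for the $\gamma=0$ stability claim. The genuine gap is at the step you dismiss as ``formally differentiating the semigroup under the integral.'' Applying the generator to $\rho$ produces a boundary term $-e^{\gamma t}[\mP_t^{\bk}h_0](\bx)\big|_{t=0}^{\infty}$, and the constraint $\nabla\cdot(\bff\rho+\bg\bar\brho)=\gamma\rho+h_0$ only emerges if $e^{\gamma t}[\mP_t^{\bk}h_0](\bx)\to 0$ as $t\to\infty$ for $\mu_0$-a.e.\ $\bx$. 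This does not follow from pointwise finiteness of $\rho$ (an integrable function of $t$ need not tend to zero), and you also assert that finiteness ``under Assumption \ref{assumption_onocp}'' without deriving it. This is precisely where the paper does its real work: finiteness of the cost together with $q\ge c>0$ on $\bX_1$ gives $\int_{\bX_1}\rho\,d\bx<\infty$ (its Eq.~(\ref{before_claim})), and then Barbalat's Lemma applied to $f(t)=\int_0^t e^{\gamma\tau}\int_{\bX_1}[\mP_\tau^{\bk}h_0](\bx)\,d\bx\,d\tau$, whose derivative is uniformly continuous for $\gamma\le 0$, yields the required decay (\ref{convergence}). Without this chain the central PDE constraint in \eqref{eqn_ocpdiscount_L2} is not established, so you should supply it (or an equivalent decay argument) to close the proof.

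A secondary remark: your ``converse'' claim that any nonnegative $\rho\in{\cal S}$ solving the PDE must coincide with the occupation density, giving a one-to-one correspondence for each fixed $\bk$, is asserted without justification and is not what is needed; what matters, and what you correctly identify in your closing paragraph as the delicate point, is that a feasible minimizer $(\rho^\star,\bar\brho^\star)$ with $\rho^\star>0$ decodes to a feedback via \eqref{feedback_input} with the same cost. Your handling of the cost rewriting (duality plus the perspective-function convexification via $\bar\brho=\bk\rho$), the positivity of $\rho$ from $h_0>0$, and the $\gamma=0$ stability conclusion via Theorem \ref{theorem_necc_suffuniform} all match the paper's argument.
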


Proof of theorem \eqref{theorem_maingeometric} is given in Appendix.

Next, we consider discounted cost OCP with ${\cal L}_1$ norm on control term 
\begin{eqnarray}
 \inf\limits_{\bk\in {\cal C}^1(\bX)} &\int_{\bX_1}\left[\int_0^\infty e^{\gamma t}(q(\bx(t))+ \beta \|\bk(\bx(t))\|_1)\;dt\right] d\mu_0(\bx)\nonumber\\
 {\rm s.t.}&\dot \bx={\bf f}(\bx)+\bg(\bx)\bk(\bx).
\label{ocp_main_discounted1}
\end{eqnarray}
We make the following assumption on the nature of optimal control for the ${\cal L}_1$-norm OCP (\ref{ocp_main_discounted1}). 
\begin{assumption}\label{assume_OCP1}
We assume that the state cost function $q: \mR^n\to \mR_{\geq 0}$ is zero at the origin and uniformly bounded away from zero outside the neighborhood ${\cal N}_\delta$ and $\bR>0$. Furthermore, there exists a feedback control input for which the cost function in \eqref{ocp_main_discounted1} is finite. Furthermore,  the optimal control is feedback in nature, i.e., $\bu^\star=\bk^\star(\bx)$ with the function $\bk^\star$ is assumed to be ${\cal C}^1(\bX,\mR^m)$. 
\end{assumption}
\begin{theorem}\label{theorem_maingeometric1}
Consider the OCP (\ref{ocp_main_discounted1}) with discount factor $\gamma\leq 0$ and assume that the cost function and optimal control satisfy Assumption \ref{assume_OCP1} respectively. Then the OCP (\ref{ocp_main_discounted1}) can be written as following infinite dimensional convex optimization problem 
\begin{eqnarray}
J^\star(\mu_0)&=&\inf_{\rho\in {\cal S},\bar \brho\in {\cal C}^1(\bX_1)} \;\;\; \int_{\bX_1} q(\bx)\rho(\bx)+\beta\|\bar \brho(\bx)\|_1 d\bx\nonumber\\
{\rm s.t}.&&\nabla\cdot ({\bf f}\rho +\bg\bar \brho)=\gamma \rho+ h_0
\label{eqn_ocpdiscount}
\end{eqnarray}
where $\bar \brho=(\bar\rho_1,\ldots, \bar\rho_m)$ and ${\cal S}:={\cal L}_1(\bX_1)\cap {\cal C}^1(\bX_1,\mR_{\geq 0})$. The optimal feedback control input is recovered from the solution of the above optimization problem as  
\begin{align}
\bk^\star(\bx)=\frac{\bar \brho^\star(\bx)}{\rho^\star(\bx)}.
\end{align}
Furthermore, if $\gamma=0$, then optimal control $\bk(\bx)$ is a.e. uniformly stabilizing w.r.t. measure $\mu_0$.
\end{theorem}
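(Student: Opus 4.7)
The plan is to mirror the argument used for Theorem~\ref{theorem_maingeometric}, with the only genuinely new step being the handling of the $\mathcal{L}_1$ penalty in place of the quadratic one. The strategy is to lift the primal OCP~\eqref{ocp_main_discounted1} into the space of densities using the P-F operator, introduce the product variable $\bar\brho(\bx):=\bk(\bx)\rho(\bx)$, and verify that (i) the lifted cost is convex in the pair $(\rho,\bar\brho)$, (ii) the divergence constraint in~\eqref{eqn_ocpdiscount} is the infinitesimal-generator form of the discounted Liouville equation driven by $h_0$, and (iii) optima of the two problems coincide with optimal feedback $\bk^\star=\bar\brho^\star/\rho^\star$.

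First I would introduce the discounted occupation density for a fixed feedback $\bk$, namely
\begin{align*}
\rho(\bx)\;=\;\int_0^\infty e^{\gamma t}\,[\mathbb{P}_t^{\bff+\bg\bk}h_0](\bx)\,dt,
\end{align*}
where $\mathbb{P}_t^{\bff+\bg\bk}$ is the P-F semigroup of the closed-loop system. Using duality with the Koopman generator and Assumption~\ref{assumption_onocp} (which guarantees that $\bk$ makes the integrand integrable), a change of variables turns the inner time integral in~\eqref{ocp_main_discounted1} into $\int_{\bX_1}(q(\bx)+\beta\|\bk(\bx)\|_1)\rho(\bx)\,d\bx$. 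Differentiating the definition of $\rho$ with respect to $t$ and using the generator~\eqref{PF_generator} produces the steady equation $\nabla\!\cdot\!((\bff+\bg\bk)\rho)=\gamma\rho+h_0$, which becomes the linear constraint in~\eqref{eqn_ocpdiscount} after the substitution $\bar\brho=\bk\rho$. This is exactly the step where the convex lift is achieved: the constraint becomes affine in $(\rho,\bar\brho)$, and the original nonlinear coupling $\bk$ is eliminated.

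The $\mathcal{L}_1$ step replaces the $\bar\brho^\top\bR\bar\brho/\rho$ term of Theorem~\ref{theorem_maingeometric}. Since $\rho\ge 0$ on $\bX_1$, one simply observes
\begin{align*}
\beta\|\bk(\bx)\|_1\,\rho(\bx)\;=\;\beta\sum_{i=1}^m|\bk_i(\bx)|\,\rho(\bx)\;=\;\beta\sum_{i=1}^m|\bar\rho_i(\bx)|\;=\;\beta\|\bar\brho(\bx)\|_1,
\end{align*}
so the integrand in~\eqref{eqn_ocpdiscount} is the density lift of the integrand in~\eqref{ocp_main_discounted1}. Unlike the quadratic case, $\|\bar\brho\|_1$ is positively homogeneous in $\bar\brho$ alone (no perspective function of $\rho$ is needed), yet convexity in $(\rho,\bar\brho)$ is trivially preserved because it depends only on $\bar\brho$. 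To close the equivalence between the two optimization problems one shows both inclusions: every admissible feedback $\bk\in {\cal C}^1$ produces an admissible $(\rho,\bar\brho)$ with the same cost (forward direction, using the occupation-density construction above), and every admissible $(\rho,\bar\brho)\in {\cal S}\times {\cal C}^1(\bX_1)$ with $\rho>0$ on $\bX_1$ produces an admissible feedback $\bk=\bar\brho/\rho$ with the same cost (reverse direction, by dividing the divergence equation by $\rho$ and integrating the P-F flow). The main subtlety here, and the step I expect to require most care, is justifying the reverse direction on the set where $\rho$ may vanish; this is handled exactly as in Theorem~\ref{theorem_maingeometric} by exploiting the Assumption~\ref{assume_OCP1} finiteness of the cost together with the condition that $h_0>0$ to conclude $\rho>0$ a.e.\ on $\bX_1$, so that $\bk^\star=\bar\brho^\star/\rho^\star$ is well-defined a.e.

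Finally, for $\gamma=0$ the steady equation reduces to $\nabla\cdot((\bff+\bg\bk^\star)\rho^\star)=h_0$ with $\rho^\star\in{\cal S}$ integrable on $\bX_1$ and $h_0$ the density of $\mu_0$. This is precisely the hypothesis of Theorem~\ref{theorem_necc_suffuniform} applied to the closed-loop vector field $\bff+\bg\bk^\star$, which yields a.e.\ uniform stability of the closed loop w.r.t.\ $\mu_0$. Since the $\mathcal{L}_1$ term does not enter the constraint, this stability conclusion is inherited verbatim from the argument for Theorem~\ref{theorem_maingeometric}.
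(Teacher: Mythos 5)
Your proposal is correct and follows essentially the same route as the paper, which proves this theorem by repeating the occupation-density argument of Theorem~\ref{theorem_maingeometric} (P-F lift of $h_0$, substitution $\bar\brho=\bk\rho$, generator-based derivation of the divergence constraint, positivity of $\rho$ from $h_0>0$, and Theorem~\ref{theorem_necc_suffuniform} for the $\gamma=0$ stability claim), with the identity $\|\bk\|_1\rho=\|\bar\brho\|_1$ replacing the quadratic term exactly as you describe. The only cosmetic slip is an early reference to Assumption~\ref{assumption_onocp} where Assumption~\ref{assume_OCP1} is the relevant hypothesis, which you use correctly later.
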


\begin{proof}
The proof of Theorem \ref{theorem_maingeometric1} follows along similar lines to the proof of Theorem \ref{theorem_maingeometric}. 
\end{proof}

\begin{remark}\label{remark_singularity}
It is important to emphasize that the optimal feedback controller with discount factor $\gamma= 0$ is stabilizing in almost everywhere sense. This is analogous to the optimal control design in the primal formulation. The optimal cost function also serves as a Lyapunov function, thereby ensuring the stability of the feedback control system. In our proposed dual setting, the optimal density function serves as a.e. stability certificate for the case of discount factor $\gamma= 0$. However, due to the dual nature of the Lyapunov function and density function \cite{Vaidya_TAC,Rantzer01}, the density function has a singularity at the origin. Because of this singularity at the origin, the cost function is evaluated in $\bX_1$ excluding the small region around the origin. Hence it may become necessary to design a local stabilizing or local optimal controller. The existence of such a local stabilizing controller is ensured following Assumption \ref{assume_localstable}. The local controller, say $\bk_\ell$, can be blended with global control $\bk^\star$ using the following  formula  \cite{rantzer2001smooth}. 

\begin{equation}
    u(\bx) = \frac{\rho_L}{\rho_L+\rho_N}\bk_\ell(\bx) + \frac{\rho_N}{\rho_L+\rho_N}\bk^*(\bx) \nonumber
\end{equation}
 
\begin{equation}
    \rho_L(\bx) = max\{((\bx^T P \bx))^{-3}-\Delta,0\} \nonumber
\end{equation}
where matrix $P > 0$ define a control Lyapunov function. The parameter $\Delta$ determines the region of operation for the local controller. 
\end{remark}



The optimal control results involving ${\cal L}_2$ and ${\cal L}_1$ norm on the control input with positive discount factor, i.e., $\gamma>0$, are proved under the following assumption. 

\begin{assumption}\label{assumption_ocppositivediscount}
We assume that the state cost function $q: \mR^n\to \mR_{\geq 0}$ is zero at the origin and uniformly bounded away from zero outside the neighborhood ${\cal N}_\delta$ and $\bR>0$. Furthermore, there exists a feedback control for which the cost function in (\ref{cost_function}) is finite and  that the optimal control is feedback in nature, i.e., $\bu^\star=\bk^\star(\bx)$ with the function $\bk^\star$
 being in ${\cal C}^1(\bX,\mR^m)$. Furthermore, the feedback controller is assumed to be almost everywhere exponentially stabilizing  (Definition \ref{definition_aeexponential}) with decay rate $\gamma'>\gamma>0$.
\end{assumption}
Note that Assumption \ref{assumption_ocppositivediscount} is same as Assumption \ref{assumption_onocp} except for the additional requirment that the feedback controller is a.e. exponentially stabilizing with decay rate strictly large than $\gamma$.

\begin{theorem}\label{theorem_maingeometric_positivediscount}
Consider the OCP (\ref{ocp_main_discounted}) with discount factor $\gamma> 0$ and assume that the cost function and optimal control satisfy Assumption \ref{assumption_ocppositivediscount}. Then the OCP (\ref{ocp_main_discounted}) can be written as the following infinite dimensional convex optimization problem 
\begin{eqnarray}
J^\star(\mu_0)&=&\inf_{\rho\in {\cal S},\bar \brho\in {\cal C}^1(\bX_1)}\int_{\bX_1} q(\bx)\rho(\bx)+\beta\frac{\bar \brho(\bx)^\top \bR\bar \brho(\bx)}{\rho} d\bx\nonumber\\
{\rm s.t}.&&\nabla\cdot ({\bf f}\rho +\bg\bar \brho)=\gamma \rho+ h_0
\label{eqn_ocpdiscount_L2}
\end{eqnarray}
where $\bar \brho=(\bar \rho_1,\ldots, \bar\rho_m)$ and ${\cal S}:={\cal L}_1(\bX_1)\cap {\cal C}^1(\bX_1,\mR_{\geq 0})$. The optimal feedback control input is recovered from the solution of the above optimization problem as  
\begin{align}
\bk^\star(\bx)=\frac{\bar \brho^\star(\bx)}{\rho^\star(\bx)}\label{feedback_input}.
\end{align}
\end{theorem}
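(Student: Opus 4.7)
The plan is to mirror the proof of Theorem \ref{theorem_maingeometric} (appearing in the appendix for the $\gamma\le 0$ case) and show that, under the stronger Assumption \ref{assumption_ocppositivediscount}, every object appearing in that derivation remains well defined when $\gamma>0$. Fix any admissible feedback $\bk$ satisfying the assumption, and let $\mathbb{P}_t^{\bk}$ denote the Perron--Frobenius semigroup of the closed-loop system $\dot\bx=\bff(\bx)+\bg(\bx)\bk(\bx)$. Define the discounted occupation density
\[\rho_{\bk}(\bx) := \int_0^\infty e^{\gamma t}\,[\mathbb{P}_t^{\bk} h_0](\bx)\,dt.\]
The a.e. exponential stability of the closed loop with decay rate $\gamma'>\gamma$ (Definition \ref{definition_aeexponential}) gives, by duality between $\mathbb{K}_t$ and $\mathbb{P}_t$, a bound of the form $\|\mathbb{P}_t^{\bk} h_0\|_{{\cal L}_1(\bX_1)}\le M e^{-\gamma' t}$, so that $\|e^{\gamma t}\mathbb{P}_t^{\bk} h_0\|_{{\cal L}_1(\bX_1)}\le M e^{-(\gamma'-\gamma)t}$. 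This yields both the ${\cal L}_1(\bX_1)$-integrability of $\rho_{\bk}$ and, crucially, the vanishing of the boundary term $e^{\gamma t}\mathbb{P}_t^{\bk} h_0\to 0$ as $t\to\infty$; the ${\cal C}^1$ regularity of $\rho_{\bk}$ follows from the assumed smoothness of $\bff,\bg,\bk,h_0$.

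With $\rho_{\bk}$ in hand, differentiating $e^{\gamma t}\mathbb{P}_t^{\bk}h_0$ in $t$, applying the generator identity \eqref{PF_generator} to the closed-loop field $\bff+\bg\bk$, and integrating from $0$ to $\infty$ (the infinite-time boundary term drops by the bound above) produces
\[\nabla\cdot\bigl((\bff+\bg\bk)\rho_{\bk}\bigr)=\gamma\rho_{\bk}+h_0.\]
Setting $\bar\brho:=\bk\,\rho_{\bk}$ recasts this as the linear constraint of \eqref{eqn_ocpdiscount_L2}, giving a feasible pair $(\rho_{\bk},\bar\brho)\in {\cal S}\times{\cal C}^1(\bX_1)$. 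For the cost, the same exponential bound justifies Fubini, and the Koopman--P-F duality
\[\int_{\bX_1}\varphi(\bs_t(\bx))h_0(\bx)\,d\bx=\int_{\bX_1}\varphi(\bx)[\mathbb{P}_t^{\bk} h_0](\bx)\,d\bx\]
applied to $\varphi(\bx)=q(\bx)+\beta\bk(\bx)^\top\bR\bk(\bx)$ converts the time integral in \eqref{ocp_main_discounted} to $\int_{\bX_1}(q+\beta\bk^\top\bR\bk)\rho_{\bk}\,d\bx$. Substituting $\bk=\bar\brho/\rho_{\bk}$ yields the objective of \eqref{eqn_ocpdiscount_L2}, establishing the $\le$ inequality between the primal and the relaxed cost. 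The converse direction is the standard one: any feasible $(\rho,\bar\brho)$ with $\rho>0$ defines $\bk:=\bar\brho/\rho$, and uniqueness of the ${\cal L}_1$-integrable, ${\cal C}^1$ solution to the linear transport PDE (on $\bX_1$, with the decay profile at infinity dictated by the rate $\gamma'>\gamma$) forces $\rho=\rho_{\bk}$ and therefore equality of objective values. Joint convexity is inherited from the perspective-type term $\bar\brho^\top\bR\bar\brho/\rho$ and the linearity of the PDE in $(\rho,\bar\brho)$, so the optimal feedback is recovered by \eqref{feedback_input}.

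The main obstacle is precisely what distinguishes this statement from Theorem \ref{theorem_maingeometric}: for $\gamma\le 0$ every limit, tail bound, and Fubini exchange was free, whereas for $\gamma>0$ they all hinge on a quantitative decay of $\mathbb{P}_t^{\bk} h_0$ that strictly beats the exponential growth $e^{\gamma t}$. The technical core of the argument is therefore the translation of Definition \ref{definition_aeexponential} (with rate $\gamma'>\gamma$) into the ${\cal L}_1$ bound $\|e^{\gamma t}\mathbb{P}_t^{\bk} h_0\|_{{\cal L}_1(\bX_1)}\le M e^{-(\gamma'-\gamma)t}$; once this one bound is available, the boundary term in the PDE derivation, the integrability of $\rho_{\bk}$, and the Fubini step in the cost rewriting all follow at once, and the remainder of the proof is structurally identical to the $\gamma\le 0$ case. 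A secondary point is that the construction never produces a stability certificate for free (in contrast to the $\gamma=0$ statement of Theorem \ref{theorem_maingeometric}), which is why Assumption \ref{assumption_ocppositivediscount} must be posited rather than concluded.
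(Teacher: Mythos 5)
Your proposal is correct and follows essentially the same route as the paper: the paper's proof also mirrors the proof of Theorem \ref{theorem_maingeometric} verbatim up to the decay claim, and replaces the Barbalat-lemma step (valid only for $\gamma\le 0$) by exactly the observation you make, namely that Assumption \ref{assumption_ocppositivediscount}, via $\mu_0(B_t)\le M e^{-\gamma' t}$ with $\gamma'>\gamma$, forces $e^{\gamma t}[\mathbb{P}_t h_0]\to 0$ so that the defining integral for $\rho$, the boundary term in the PDE derivation, and the Fubini exchange all remain valid. Your rendering of the assumption as the quantitative bound $\|e^{\gamma t}\mathbb{P}_t^{\bk}h_0\|_{{\cal L}_1(\bX_1)}\le M e^{-(\gamma'-\gamma)t}$ is just a slightly more explicit form of the paper's pointwise statement, not a different argument.
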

The proof of this theorem is provided in the Appendix. Theorem analogous to Theorem \ref{theorem_maingeometric1} can be stated and proved for the case involving ${\cal L}_1$ control norm and with positive discount factor $\gamma>0$. \\

\begin{remark}
In the above formulations of the OCPs, we did not explicitly impose constraints on the control input. Explicit constraints on the magnitude of the control input can be imposed in a convex manner as follows:
\begin{align}
   \|\bu\|_1\leq M\iff   |\bar \rho_1(\bx)|^2+\ldots+|\bar \rho_m(\bx)|^2\leq M \rho(\bx),\label{control_constraints}
\end{align}
for some positive constant $M$. To arrive at (\ref{control_constraints}) we have used the formula for the optimal feedback control, i.e., (\ref{feedback_input}) and the fact that $\rho(\bx)>0$.  
The above constraints are linear in the optimization variables $\bar \brho$ and $\rho$ and hence can be implemented convexily. So the OCP involving explicit norm constraints on the control input can be implemented convexily by augmenting the optimization problem  with linear constraints in (\ref{control_constraints}). 
\end{remark}

\begin{remark}
In the above formulations of the OCP problem, we have assumed that density function $h_0>0$, implying that the initial measure $\mu_0$ is equivalent to Lebesgue. This assumption is necessary as $h_0>0$ guarantees that $\rho>0$ (refer to Eq.  (\ref{definingrho})) and hence the  the feedback control input $\bk=\frac{\bar \brho}{\rho}$ is well defined. However, it is possible to relax this assumption and work with density function $h_0\geq 0$. This will correspond to the case where the initial measure $\mu_0$ is continuous w.r.t. Lebesgue measure and not equivalent to Lebesgue. In order to ensure that the feedback control input is well defined when $\mu_0$ is continuous w.r.t. Lebesgue measure, we need to impose the following constraints on the control input
\[  |\bar \rho_k(\bx)|^2\leq M \rho(\bx),\;\;k=1,\ldots,m\]
for some large constant $M$. The above constraints will ensure that the for a.e. $\bx$ if $\rho(\bx)=0\implies \bar \brho_k(\bx)=0$ for $k=1,\ldots, m$ thereby the feedback control input is well defined. Working with absolutely continuous initial measure $\mu_0$ or equivalently $h_0\geq 0$ will correspond to the case where optimality is guaranteed only from set of initial condition with support on $\mu_0$.      
\end{remark}
In the following, we demonstrate how the results involving dual formulation to the OCP problem works out for the special case of scalar linear system. The main conclusion is that the optimal control obtained using dual formulation matches with the control obtained using a primal formulation of OCP, namely the linear quadratic regulator problem for a particular choice of $h_0(\bx)$. Note that the initial measure or the density function $h_0(\bx)$ is unique to our dual formulation with no parallel in the primal formulation. 

\section{Koopman and SOS-based Computation Framework for Optimal Control} \label{sec:SOS Control}
This section provides Koopman and SOS-based computational framework for the finite-dimensional approximation of OCP involving ${\cal L}_1$/${\cal L}_2$ costs. We begin with the following parameterization for the optimization variables $\rho$ and $\bar\brho$. 
\begin{equation}
    \rho(\bx) = {a(\bx)}/b(\bx)^{\alpha},\;\; \bar{\brho}(\bx) = \bc(\bx)/b(\bx)^{\alpha}, \label{rational_parametrization}
\end{equation}
where $a(\bx)\geq 0$  and $\bc(\bx) = \begin{bmatrix} c_1(\bx),\, \ldots,\, c_m(\bx)\end{bmatrix}^\top$. Here, $b(\bx)$ is a positive polynomial (positive at $\bx \ne 0$), and $\alpha$ is a positive constant which is sufficiently large for integrability condition. In fact $b(\bx)$ is chosen to be control Lyapunov function based on the  linearized control dynamics at the origin. The data-driven procedure for the identification of the linear dynamics used in the construction of $b(\bx)$ is explained in Remark \ref{remark_bconstrucntion}. The particular form for the parameterization of the optimization variable in (\ref{rational_parametrization}) is chosen because of the fact that $\rho$ has singularity at the origin (Remark \ref{remark_singularity}). Using~\eqref{rational_parametrization}, we can write the constraints for the optimization problem as  in~\eqref{ocp_main_discounted1} and~\eqref{eqn_ocpdiscount} as
\begin{align*} 
h &=\nabla  \cdot (\bff\rho  + \bg\bar{\brho}) - \gamma \rho = \nabla  \cdot \left[(\bff a + \bg\bc)/b^{\alpha} \right] - \gamma \frac{a}{b^{\alpha}}\\
&=\frac{1}{b^{\alpha +1}} [(1 + \alpha) b \nabla  \cdot (\bff a + \bg\bc) - \alpha \nabla \cdot (b\bff a + b\bg\bc)\\
&\;\;\;\;-\gamma a b].
\end{align*}
With the above form of the constraints, we assume the following parameterization for $h=\frac{d}{b^{\alpha+1}}$, 
where $d$ is an arbitrary positive polynomial. With the assumed form of $h$, we write the constraints in the optimization variable, $a$ and $\bc$ 
 as
\begin{align} \label{rational_parametrization_1}
(1 + \alpha) b \nabla  \cdot (\bff a + \bg\bc) - \alpha \nabla \cdot (b\bff a + b\bg\bc) - \gamma a b = d
\end{align}
The above constraint in the optimization problem can be written in terms of the P-F generator as follows:
\begin{align}
    &(1+\alpha)b \left({\cal P}_{\bf f}a+\sum_{i=1}^m{\cal P}_{\bf g_i}c_i\right)-\alpha \left({\cal P}_{\bf f}(ba)+\sum_{i=1}^m{\cal P}_{\bf g_i}(bc_i)\right)\nonumber\\
    &-\gamma a=d\label{ss12}
\end{align}
\subsection{Data-driven Approximation of the Generators}\label{section_datagenerator}
From (\ref{ss12}), it follows that the data-driven approximation of the constraints in the optimization problems (\ref{eqn_ocpdiscount}) and (\ref{eqn_ocpdiscount_L2}) involves approximation of the P-F generators, ${\cal P}_{\bf f}$ and ${\cal P}_{\bg_i}$. Furthermore, the P-F generator can be expressed in terms of the Koopman generator as 
\begin{eqnarray}
-{\cal P}_\bff \psi \hspace{-0.02in} = \hspace{-0.02in} \nabla\cdot(\bff \psi) \hspace{-0.02in} = \hspace{-0.02in} \bff \cdot \nabla \psi+\nabla\cdot \bff \psi \hspace{-0.02in} = \hspace{-0.02in} {\cal K}_\bff \psi+\nabla \cdot \bff \psi.\label{PF_gen_approx}
\end{eqnarray}

Expressing the P-F generator in terms of the Koopman generator allows us to use data-driven methods used to approximate the Koopman generator for the approximation of the P-F generator. In particular, we use generator Extended Dynamic Mode Decomposition (gEDMD) algorithm from~\cite{klus2020data} for the approximation.
To approximate Koopman generators, we first collect time-series data from the dynamical system in~\eqref{cont_syst1} by injecting different control inputs: i) zero control inputs, $\bu=0$, and ii) unit step control inputs, $\bu=\mathbf{e}_j$\footnote{$\be_j \in \mathbb{R}^m$ denotes unit vectors, i.e., $j$th entry of $\mathbf{e}_j$ is 1, otherwise 0.} for $j=1,\ldots,m$ for a finite time horizon with sampling step $\delta t$. Let, 
\begin{align} \label{eq:data_matrix}
    \mathbf{X}_i = \left [ \mathbf{x}_1, \ldots, \mathbf{x}_{T_i} \right ], \,\, \dot{\mathbf{X}}_i = \left [ \dot{\mathbf{x}}_1, \ldots, \dot{\mathbf{x}}_{T_i} \right ],
\end{align}
with $i=0,1,\ldots,m$ for zero and step control inputs where $T_i$ are the number of data points for $i$th input case. The samples in $\mathbf{X}_i$ do not have to be from a single trajectory; it can be a concatenation of multiple experiment/simulation trajectories. Also, time derivatives of the states $\dot{\bx}$ can be accurately estimated using numerical algorithms such as finite differences. Next, we construct a polynomial basis vector:
\begin{align} \label{eq:basis functions}
\bPsi(\bx) = [\psi_1(\bx), \ldots, \psi_Q(\bx)]^\top,
\end{align}
which can include monomials or Legendre/Hermite polynomials. The data-driven approximation of the generator will essentially involve the projection of the infinite-dimensional Koopman and P-F generators on the finite-dimensional space spanned by the basis function (\ref{eq:basis functions}). Following \cite{klus2020data}, we define:
\begin{align} \label{eq:basis functions time derivatives}
\dot{\bPsi}(\bx,\dot{\bx}) = [\dot{\psi}_1(\bx,\dot{\bx}), \ldots, \dot{\psi}_Q(\bx,\dot{\bx})]^\top,
\end{align}
\begin{align}
    \dot{\psi}_k(\bx,\dot{\bx}) = (\nabla_\bx \psi_k)^\top \dot{\bx} = \textstyle{\sum}_{j=1}^n \frac{\partial \psi_k}{\partial x_j} \frac{dx_j}{dt}
\end{align}
The partial derivatives of the basis function are computed analytically which is required for $\dot{\psi}_k(\bx,\dot{\bx})$. Note that we also need $\frac{dx_j}{dt}$ which is simply denoted by $\dot{x}_j$. The value of $\dot{x}_j$ is  approximated using finite differences:
\begin{align}
    \dot{x}_j \approx \frac{x_j - x_{j-1}}{\Delta t}
\end{align}
where $x_{j-1}$ and $x_j$ are the $j-1^{th}$ and $j^{th}$ data point in the system trajectory and $\Delta t$ is the time difference between two consecutive data points. A more sophisticated finite-difference method can be used, e.g., total variation regularization~\cite{Chartrand2011-ni} for noisy data and discontinuous derivatives, and also second-order central difference for better accuracy. Then, the Koopman generator approximate $\mathbf{L}_i$ for each input case can be approximated as:
\begin{align} \label{eq:finite Koopman generator}
\begin{split}
    \mathbf{L}_i &= \underset{\bL_i}{\mathrm{argmin}} \, ||\bB_i - \bA_i \bL_i||_F,\\
      \mathbf{A}_i &= \frac{1}{T_i} \textstyle{\sum}_{\ell=1}^{T_i} \, \boldsymbol{\Psi}(\bX_{i,\ell}) \boldsymbol{\Psi}(\bX_{i,\ell})^\top,\\
        \mathbf{B}_i &= \frac{1}{T_i} \textstyle{\sum}_{\ell=1}^{T_i} \, \boldsymbol{\Psi}(\bX_{i,\ell}) \dot{\boldsymbol{\Psi}}(\bX_{i,\ell},\dot{\bX}_{i,\ell})^\top,
\end{split}
\end{align}
and $\bX_{i,\ell}$ and $\dot{\bX}_{i,\ell}$ denote $\ell$th column of $\bX_i$ and $\dot{\bX}$, respectively.  The solution of~\eqref{eq:finite Koopman generator} is explicitly known, $\mathbf{K}_i = \mathbf{A}_i^\dagger \mathbf{B}_i$, where $\dagger$ stands for pseudo-inverse. Given the Koopman generator approximates for $\bff$, ${\cal K}_{\bff}\approx \bL_0$, using the linearity of the generators,
\begin{equation}\label{eq:L0}
{\cal K}_{\bg_j}\approx {\bL}_j-\bL_0,\;\;\;j=1,\ldots, m.
\end{equation}

The above is one method to estimate ${\cal K}_{\bff}$ and ${\cal K}_{\bg_j}$. They can also be approximated jointly by using trajectories subject to arbitrary inputs.
Next, we approximate the divergence of vector field $\bff$ as
\begin{equation}
\nabla \cdot {\bff} =\nabla \cdot [{\cal K}_0 x_1,\ldots, {\cal K}_0 x_n]^\top \approx \nabla \cdot(\bC_x^\top \bL_0 \bPsi) \label{approx_divergence}
\end{equation}
where $\bC_x$ is a coefficient vector for $\bx$, i.e., $\bx = \bC_x^\top \bPsi$, which can be found easily if $\bPsi$ includes 1st-order monomials (i.e., $\bx$). 
Similarly, the divergence of vector fields $\bg_j$ are approximated as 
\begin{equation}
\nabla\cdot (\bg_j)\approx \nabla \cdot(\bC_x^\top \bL_j \bPsi),\;\;j=1,\ldots,m.\label{divg_approx}
\end{equation}
from~\eqref{PF_gen_approx},~\eqref{eq:L0}--\eqref{divg_approx}, P-F generators are approximated by
\begin{equation} \label{eq:PF generators}
{\bP}_j\hspace{-0.03in}=\hspace{-0.03in}\bL_j+\nabla \cdot(\bC_x^\top \bL_j \bPsi) {\bf I}
\end{equation}
for $j=0,1,\ldots,m$. 
\begin{remark}
While the above procedure describes an approach for the approximation of the Koopman generators corresponding to the drift vector field, ${\bf f}(\bx)$, and control vector fields, $\bg_j(\bx)$, for $j=1,\ldots,m$ using zero input and step input, it is also possible to identify these vector field using random inputs. The problem of data-driven identification of the system dynamics using random or arbitrary control input will involve identifying bilinear vector fields. It can again be reduced to a least-square optimization problem similar to (\ref{eq:finite Koopman generator}).
\end{remark}

To parameterize the optimization variables, we express polynomial functions $a(\bx)$, $b(\bx)$, and $c_j(\bx)$ with respect to the basis $\bPsi(\bx)$. Let $\bC_a$, $\bC_b$, and $\bC_{c_j}$ be the coefficient vectors used in the expansion of $a(\bx)$, $b(\bx)$, and $c_j(\bx)$:
\begin{align} \label{eq:a&cj}
    a(\bx) \hspace{-0.03in}=\hspace{-0.03in} \bC_a^\top \bPsi(\bx), b(\bx) \hspace{-0.03in}=\hspace{-0.03in} \bC_b^\top \bPsi(\bx), c_j(\bx)\hspace{-0.03in}=\hspace{-0.03in}\bC_{c_j}^\top \bPsi(\bx). 
\end{align}
Note that $\bC_b$ contains constant coefficients of $b(\bx)$ since $b(\bx)$ is a known polynomial function. Similarly, let $\bC_{ab}$, $\bC_{bc_1}$, \ldots, $\bC_{bc_m}$ denote coefficient vectors of polynomials, $a(\bx) b(\bx)$, and $b(\bx) c_j(\bx)$, for $j=1,\ldots,m$, namely,
\begin{align} \label{eq:ab&bc}
    a(\bx)b(\bx) = \bC_{ab}^\top \bPsi (\bx), b(\bx) c_j(\bx) = \bC_{bc_j}^\top \bPsi (\bx).
\end{align}
Constructing the coefficient vectors $\bC_{ab}$, and $\bC_{bc_j}$ from the coefficient vectors $\bC_{a}$, $\bC_b$, and $\bC_{c_j}$ requires trivial numerical procedures. In case that $\bPsi(\bx)$ is a monomial, finding these coefficient vectors are straightforward. If $\bPsi(\bx)$ is not a monomial vector, it involves more complicated numerical steps. One approach is to express the basis with respect to a common monomial vector $\mathcal{M}(\bx)$ such that $\bPsi(\bx) = \bC_\Psi^\top \mathcal{M}(\bx)$ where $\bC_\Psi$ is a coefficient matrix that connects $\bPsi(\bx)$ and $\mathcal{M}(\bx)$. Then, we can find coefficient vectors in terms of the monomial vector $\mathcal{M}(\bx)$, i.e., $a(\bx) b(\bx) = \bC_{ab}^{'} \mathcal{M}(\bx)$ and $b(\bx) c_j(\bx) = \bC_{bc_j}^{'} \mathcal{M}(\bx)$, and convert these coefficient vectors back to the original basis $\bPsi(\bx)$ by multiplying pseudo-inverse of $\bC_\Psi$, i.e., $\bC_{ab} = \bC_\Psi^\dagger \bC_{ab}^{'}$ and $\bC_{bc_j} = \bC_\Psi^\dagger \bC_{bc_j}^{'}$. The implementation of this procedure can be done easily using polynomial toolbox provided in SOSOPT in Matlab~\cite{Seiler_2013_SOSOPT}.



Now, using approximated infinitesimal PF generators in~\eqref{eq:PF generators}, we restate the LHS of~\eqref{rational_parametrization_1} in~\eqref{eqn_ocpdiscount_L2} and~\eqref{eqn_ocpdiscount} as:
\begin{align} \label{eq:stability_approximation}
\begin{split}
&(1+\alpha) b(\bx) \left ( \bC_a^\top \bP_0 \bPsi(\bx) + \textstyle{\sum}_{j=1}^m \bC_c^\top \bP_j \bPsi(\bx) \right )\\
&- \alpha \left( \bC_{ab}^\top \bP_0 \bPsi(\bx) + \textstyle{\sum}_{j=1}^m \bC_{bc_j}^\top \bP_j \bPsi(\bx) \right) - \gamma a(\bx) b(\bx).
\end{split}
\end{align}

\begin{remark}
In \cite{korda2018convergence,klus2020data}, convergence results for the finite-dimensional approximation of the Koopman operator and generators in the limit as the number of basis functions and data points goes to infinity is studied. These convergence results combined with the finite-dimensional approximation of the cost function for the optimization problem can be used to provide theoretical justification for the solution obtained using the finite-dimensional approximation of the infinite-dimensional optimization problem. 
\end{remark}
In the following sections, we discuss the finite dimensional approximation of the cost function leading up to the finite dimensional approximation of the infinite dimensional optimization problems (\ref{eqn_ocpdiscount}) and (\ref{eqn_ocpdiscount_L2}) involving ${\cal L}_1$ and ${\cal L}_2$ norms on control input respectively.

\subsection{Optimal Control with $\mathcal{L}_1$ norm of feedback control}

Using the assumed paramaterization for the 
$\rho$ and $\bar\brho$ from (\ref{rational_parametrization}) the cost function for the ${\cal L}_1$ OCP problem in~\eqref{eqn_ocpdiscount} can be written as 
\begin{align}\label{eqn_ocp3}
\begin{split}
\inf_{a\geq 0,\bc} &\;\;\; \int_{\bX_1} \frac{q(\bx) a(\bx)}{b(\bx)^\alpha}+\frac{\beta||\bc(\bx)||_1}{b(\bx)^\alpha} d\bx
\end{split}
\end{align}
where a small neighborhood of the origin, ${\cal N} = \{\bx \in \bX: |\bx| \le \epsilon,\, \epsilon > 0\}$, 
is chosen as a polytope and excluded from the integration of the cost function to remove singularity of the density at the origin (refer to Remark \ref{remark_singularity}).


To make~\eqref{eqn_ocp3} solvable, we introduce dummy polynomials $\bs(\bx) = [s_1(\bx),\ldots,s_m(\bx)]^\top$,  adding constraints:
\begin{align} \label{eq:const on sx}
    \bs(\bx) - \bc(\bx) \geq 0,\, \bs(\bx) + \bc(\bx) \geq 0.
\end{align}
The polynomial $\bs$ is expressed in terms of the basis function using the coefficient vector $\bC_{s_j}$ as:
\begin{align} \label{eq:a&cj}
    s_j(\bx)\hspace{-0.03in}=\hspace{-0.03in}\bC_{s_j}^\top \bPsi(\bx),
\end{align}
for $j=1,\ldots,m$.
Substituting (\ref{eq:ab&bc}) in the integral cost (\ref{eqn_ocp3}), we obtain following finite dimensional approximation of the cost function.
\begin{align} \label{eq:L1 cost precomp}
    \bd_1^\top \bC_a + \beta \textstyle{\sum}_{j=1}^{m} \bd_2^\top \bC_{s_j}
\end{align}
where $\bd_1$ and $\bd_2$ are the coefficient vectors given by
\begin{align} \label{eq:integral}
        \bd_1 = \int_{\bX_1} \frac{q(\bx)\bPsi(\bx)}{b(\bx)^\alpha} d\bx,\,\, \bd_2 = \int_{\bX_1} \frac{\bPsi(\bx)}{b(\bx)^\alpha} d\bx.
\end{align}
Using~\eqref{eq:const on sx}--\eqref{eq:integral} and SOS positivity constraints denoted by $\Sigma[\bx]$, \eqref{eqn_ocp3} can be expressed as a SOS problem as:
\begin{align} \label{eqn_ocp5}
\begin{split}
        \min_{\underset{j=1,\ldots,m}{\bC_a, \bC_{c_j}, \bC_{s_j}}} &\,\,\, \bd_1^\top \bC_a + \beta \textstyle{\sum}_{j=1}^{m} \bd_2^\top \bC_{s_j}\\
        &{\rm s.t}. \,\; \eqref{eq:stability_approximation} \in \Sigma[\bx],\,\, a(\bx) \in \Sigma[\bx],\\
        &\, (\bs(\bx) - \bc(\bx)) \in \Sigma[\bx],\, (\bs(\bx) + \bc(\bx)) \in \Sigma[\bx].
\end{split}
\end{align}

\subsection{Optimal Control with $\mathcal{L}_2$ norm of feedback control}
$\mathcal{L}_2$ OCP in~\eqref{eqn_ocpdiscount_L2} is restated as:
\vspace{-0.0in}
\begin{align}\label{eq:eqn_ocp6}
\begin{split}
    \min_{a,\bc} &\quad \int_{\bX_1} \frac{q(\bx)a(\bx)}{b(\bx)^\alpha} + \beta \frac{\bc(\bx)^\top \bR \bc(\bx)}{a(\bx)b(\bx)^\alpha} dx\\
    &\mathrm{s.t.} \quad \eqref{eq:stability_approximation} \geq 0,\,\, a(\bx) \geq 0.
\end{split}
\end{align}
by following the same parameterization in~\eqref{rational_parametrization}. Subsequently, we reformulate \eqref{eq:eqn_ocp6} as follows:
\vspace{-0.0in}
\begin{align}\label{eq:eqn_ocp7}
\begin{split}
    \min_{a,\bc,w} &\quad \int_{\bX_1} \frac{q(\bx)a(\bx)}{b(\bx)^\alpha} + \beta \frac{w(\bx)}{b(\bx)^\alpha} d\bx\\
    \mathrm{s.t.} &\quad \eqref{eq:stability_approximation} \geq 0,\,\, a(\bx) \geq 0,\\
    &\quad \bM(\bx) = \begin{bmatrix} w(\bx) & \bc(\bx)^\top \\ \bc(\bx) & a(\bx)\bR^{-1} \end{bmatrix} \succcurlyeq 0,
\end{split}
\end{align}
where the positive semidefinite (PSD) of $\bM(\bx)$ is a result of applying the Schur complement lemma on $\mathcal{L}_2$ cost bounded by $w(\bx)$, i.e., $\frac{\bc(\bx)^\top \bR \bc(\bx)}{a(\bx)} \leq w(\bx)$. Now, to algebraically express $\bM(\bx)\succcurlyeq 0$, we introduce the lemma:
\begin{lemma}[\scriptsize Positive semidefinite polynomial matrix] ~\cite{Scherer_2006} \label{lem:polyPSD}
A $p \times p$ matrix $\bH(\bx)$ whose entries are polynomials is positive semidefinite with respect to the monomial vector $\bz(\bx)$, \textit{if and only if}, there exist $\bD \succcurlyeq 0$ such that
\begin{align*}
    \bH(\bx) = \left(\bz(\bx) \otimes \bI_p \right)^\top \bD \left(\bz(\bx) \otimes \bI_p\right),
\end{align*}
where $\otimes$ denotes a Kronecker product (tensor product) and $\bI_p$ is an identity matrix with dimension $p$.
\end{lemma}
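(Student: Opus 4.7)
The statement is a matrix analogue of the standard Gram-matrix characterization of sum-of-squares polynomials, and I would prove the two implications separately.

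\textbf{Sufficiency ($\Leftarrow$).} Given such a $\bD \succcurlyeq 0$, factor $\bD = \mathbf{E}^\top \mathbf{E}$. For any vector $\mathbf{v} \in \mR^p$ and any $\bx$, a direct computation yields
\[
\mathbf{v}^\top \bH(\bx) \mathbf{v} \;=\; \mathbf{v}^\top \bigl(\bz(\bx) \otimes \bI_p\bigr)^\top \bD \bigl(\bz(\bx) \otimes \bI_p\bigr) \mathbf{v} \;=\; \bigl\| \mathbf{E}\bigl(\bz(\bx) \otimes \bI_p\bigr)\mathbf{v} \bigr\|_2^2 \;\geq\; 0,
\]
so $\bH(\bx) \succcurlyeq 0$ at every $\bx$. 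This direction is essentially a one-line calculation.

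\textbf{Necessity ($\Rightarrow$).} For the converse I would use a vectorization trick to reduce to a scalar SOS problem. Introduce an auxiliary vector $\mathbf{v} \in \mR^p$ and define the scalar polynomial $r(\bx,\mathbf{v}) := \mathbf{v}^\top \bH(\bx)\mathbf{v}$, which is polynomial in $\bx$ and quadratic in $\mathbf{v}$. Pointwise PSD-ness of $\bH(\bx)$ is equivalent to $r(\bx,\mathbf{v}) \geq 0$ on all of $\mR^n \times \mR^p$. The hypothesis that $\bH$ is PSD \emph{with respect to} $\bz(\bx)$ encodes the degree-and-support constraints needed to guarantee that $r(\bx,\mathbf{v})$ admits a sum-of-squares decomposition in the enlarged monomial vector $\bz(\bx)\otimes\mathbf{v}$. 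Invoking the Gram-matrix representation of SOS polynomials then yields a symmetric $\bD \succcurlyeq 0$ with
\[
r(\bx,\mathbf{v}) \;=\; \bigl(\bz(\bx) \otimes \mathbf{v}\bigr)^\top \bD \bigl(\bz(\bx) \otimes \mathbf{v}\bigr).
\]

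\textbf{Returning to matrix form.} I would finish by applying the Kronecker identity $\bz(\bx) \otimes \mathbf{v} = \bigl(\bz(\bx) \otimes \bI_p\bigr)\mathbf{v}$ to rewrite the right-hand side as $\mathbf{v}^\top \bigl(\bz(\bx) \otimes \bI_p\bigr)^\top \bD \bigl(\bz(\bx) \otimes \bI_p\bigr)\mathbf{v}$, and, since $\mathbf{v}$ is arbitrary, strip it off to conclude the desired matrix identity for $\bH(\bx)$.

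\textbf{Main obstacle.} The delicate step is the existence of the PSD Gram matrix $\bD$ in the necessity direction; this is where the phrase ``with respect to the monomial vector $\bz(\bx)$'' does the real work. A globally PSD polynomial matrix need not admit an SOS decomposition in an arbitrary prescribed basis, mirroring the classical gap between nonnegativity and SOS for scalar polynomials. One must therefore lean on the hypothesis---made precise in the cited result of Scherer (2006)---that $\bz(\bx)$ is rich enough (typically, contains all monomials up to half the degree of the entries of $\bH$) for the lifted scalar polynomial $r(\bx,\mathbf{v})$ to be SOS in $\bz(\bx)\otimes\mathbf{v}$; under that assumption the two directions line up and give the stated equivalence.
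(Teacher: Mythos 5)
The paper offers no proof of this lemma at all: it is imported verbatim from \cite{Scherer_2006}, so your argument has to stand on its own. Your sufficiency direction does: factoring $\bD=\mathbf{E}^\top\mathbf{E}$ and computing $\mathbf{v}^\top\bH(\bx)\mathbf{v}=\bigl\|\mathbf{E}\bigl(\bz(\bx)\otimes\bI_p\bigr)\mathbf{v}\bigr\|_2^2\geq 0$ is exactly right and is the only direction the paper actually uses when it enforces $\bM(\bx)\succcurlyeq 0$ in \eqref{ocp_9}.

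The necessity direction, as you have set it up, has a genuine gap. You read ``positive semidefinite with respect to the monomial vector $\bz(\bx)$'' as pointwise positive semidefiniteness of $\bH(\bx)$ on $\mR^n$, and then need the globally nonnegative biform $r(\bx,\mathbf{v})=\mathbf{v}^\top\bH(\bx)\mathbf{v}$ to admit a Gram representation in the basis $\bz(\bx)\otimes\mathbf{v}$. No richness assumption on $\bz(\bx)$ can deliver this: even for $p=1$ a globally nonnegative polynomial need not be SOS in the complete monomial basis of half its degree (Motzkin's polynomial), and for $p>1$ Choi's nonnegative-but-not-SOS biquadratic form shows the same failure in the matrix setting. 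So the step ``nonnegativity plus degree/support hypotheses $\Rightarrow$ existence of $\bD\succcurlyeq 0$'' is not a theorem you can invoke, and your closing paragraph, which leans on $\bz(\bx)$ being ``rich enough,'' does not close the hole. The equivalence is true only under the reading intended in \cite{Scherer_2006}: ``positive semidefinite with respect to $\bz(\bx)$'' there is the \emph{SOS-matrix} property relative to $\bz$, i.e., $\bH(\bx)=\bM(\bx)^\top\bM(\bx)$ with $\bM(\bx)=\mathbf{E}\bigl(\bz(\bx)\otimes\bI_p\bigr)$ for some constant matrix $\mathbf{E}$. With that definition, necessity is pure linear algebra: any such decomposition gives the PSD Gram matrix $\bD=\mathbf{E}^\top\mathbf{E}$, and conversely any (possibly indefinite) Gram matrix consistent with $\bH$ can be replaced by a PSD one precisely when the SOS-matrix property holds. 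Under your pointwise-PSD reading the ``only if'' half of the statement is simply false, so either adopt the SOS-matrix definition and finish with the factorization argument, or weaken your claim to the sufficiency direction only.
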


Following Lemma~\ref{lem:polyPSD}, let $\bz(\bx)$ be a monomial vector with the maximum degree equal to $\mathrm{floor}(\mathrm{deg}(\bPsi(\bx))/2)+1$, then $\bM(\bx)$ in~\eqref{eq:eqn_ocp7} is PSD when there exists $\bD \succcurlyeq 0$ such that $\bM(\bx)=\bH(\bx)$. Using this result and~\eqref{eq:integral}, a SOS problem equivalent to~\eqref{eq:eqn_ocp7} can be formulated as follows:
\begin{align} \label{ocp_9}
        \min_{\underset{j=1,\ldots,m}{\bC_a, \bC_w, \bC_{c_j}}} &\,\,\, \bd_1^\top \bC_a + \beta \bd_2^\top \bC_{w}\nonumber\\
        \mathrm{s.t.} &\quad \eqref{eq:stability_approximation} \in \Sigma[\bx],\,\, a(\bx) \in \Sigma[\bx],\\
        &\quad w(\bx) - \bH_{11}(\bx) = 0,\, \bc(\bx) - \bH_{12}(\bx) = 0,\nonumber\\
        &\quad a(\bx) \bR^{-1} - \bH_{22}(\bx) = 0,\, \bD \succcurlyeq 0\nonumber
\end{align}
where $\bH_{ij}(\bx)$ denotes the ${ij}$th entry of~$\bH(\bx)$; and $\boldsymbol{\mathcal{C}}_w$ is a coefficient vector of~$w(\bx)$, i.e., $w(\bx)=\boldsymbol{\mathcal{C}}_w^\top \bPsi(\bx)$.

\begin{remark}\label{remark_bconstrucntion}
To obtain $b(\bx)$, the control Lyapunov function for the linearized dynamics, we first identify the linearized control system dynamics from time-series data collected near the origin. to identify the linear dynamics, we use the gEDMD algorithm discussed  in Section \ref{section_datagenerator} for the special case of identity basis functions i.e., $\bPsi(\bx)=\bx$. Once linearized system dynamics is identified we use linear quadratic regulator based optimal control for the construction of $b(\bx)$, namely $b(\bx)=\bx^\top P\bx$, where $P$ is the solution of algebraic Riccati equation (ARE). Following Assumption \ref{assume_localstable}, we know that there exists a positive definite solution, $P$, to the ARE, which serves as control Lyapunov function for the linearized control system. 
\end{remark}

\section{Simulation Results}\label{sec:examples}
In this section, we present simulation results to illustrate the proposed data-driven control framework. All the simulation results are performed using MATLAB on a desktop computer with 64GB RAM. We have taken the value of $\alpha = 4 $ and $\beta = 1$ for all our examples. The cost function and control matrix for each example is $q(\bx) = \bx^T\bx$ and $R=1$ respectively. Furthermore, the cost function is computed outside the region $\cal N$ of the neighborhood. However, we did not implement a local stabilizing controller around the origin and hence no blending controller (Remark \ref{remark_singularity}). Also, the maximum degree of $a(\bx)$  is taken to be 1. We take the simulation time step $\Delta t = 0.01 \, \mathrm{[s]}$ for sampling time-series data, and also, Legendre polynomials are used as dictionary functions for all examples. We use SOSOPT~\cite{Seiler_2013_SOSOPT} toolbox to solve the formulated SOS optimization problems for the OCPs in~\eqref{eqn_ocp5} and~\eqref{ocp_9}. All other parameters used for each example are listed in Table \ref{table:OCP param}.

\begin{table}[b]
\small
\centering
\caption{Parameters for different examples}
\begin{tabular} {*5c}
\toprule
& \textbf {Ex~1} &  \textbf {Ex~2}&  \textbf{Ex~3} &\textbf{Ex~4}\\
\midrule
$\mathbf{X}$ &$[-5,5]^2$ & $[-5,5]^2$ & $[-5,5]^2$ & $[-5,5]^3$ \\
$\mathcal{N}$ & $[-0.1,0.1]^2$ & $[-0.1,0.1]^2$ & $[-0.1,0.1]^2$ & $[-0.1,0.1]^3$ \\
deg($c(\bx)$) & $2$ & $6$ & $3$ & $6$  \\ 
deg($s(\bx)$) & 2& $7$ & $7$ & $6$ \\
$\bPsi(\bx)$ & ${4^{th}}$ order & ${9^{th}}$ order & ${7^{th}}$ order & ${8^{th}}$ order \\
\toprule
\end{tabular}
\label{table:OCP param}
\end{table}

\noindent\textbf{Example 1:} Consider the dynamics of controlled simple nonlinear numerical system:
\begin{align*}
\dot x_1=-x_1+x_2,\;\;
\dot x_2= -0.5(x_1+x_2) + 0.5x_1^2x_2 + x_1 u.
\end{align*}
With this example we use ${\cal L}_2$ cost on the control input. For this example, optimal control and optimal cost can be found by solving the HJB equation~\cite{primbs1996optimality} analytically and are given as below:
\[u^\star(\bx)=-x_1x_2,\;\;V^\star(\bx)=0.5 x_1^2+x_2^2.\]
Next, using the proposed method, we get an optimal control with discount factor $\gamma=0$ as follows.
\[k^\star(\bx) = -1.38x_1x_2 + 0.00005x_1 - 0.00004x_2 - 0.0063\]
By rounding off the coefficients of $k^\star(\bx)$, we see that $u^\star(\bx) = k^\star(\bx)$. The small mismatch in decimal is due to the choice of $h_0(\bx)$, which is unique to our formulation but is absent from the primal formulation of OCP. The simulation results are obtained by solving inequality in the constraints corresponding to the case where $h_0\geq 0$. In this example, we collected $2 \times 10^4$ time-series data points by simulating the system to estimate the Koopman generator. Figure~\ref{E7L2_1} shows the comparison of trajectories simulated from the closed-loop system using the optimal control solutions obtained by the HJB approach (dotted red) and the proposed data-driven convex approach (solid black).  


\begin{figure}[h]
\centering
\includegraphics[width=0.9\linewidth]{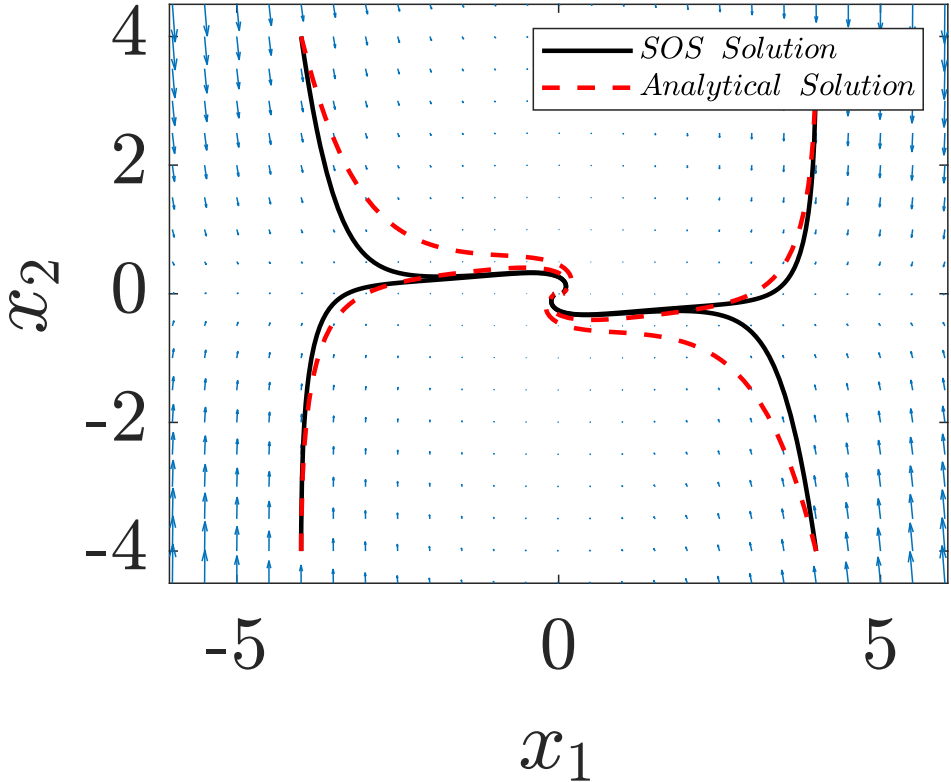}
\caption{$\mathcal{L}_2$ OCP results of system in Example 1, showing converging state trajectories for $\gamma=0$.}
\label{E7L2_1}
\end{figure}

\noindent{\textbf{Example 2.}} Consider the dynamics of controlled Van der Pol oscillator as follows:
\begin{equation*}
\dot{x}_1 = x_2,\,\, \dot{x}_2 = (1-x_1^2)x_2 - x_1 + u.
\end{equation*}
In this example, we solve the ${\cal L}_2$ OCP for different values of the discount factor. Total $2 \times 10^4$ time-series data points are collected from repeated simulations to estimate the Koopman generator. Fig.\ref{E2L2_1} and~\ref{E2L2_6} show the trajectories of the closed-loop system starting from arbitrary initial points obtained from discount factor values, $\gamma=0$ and $\gamma=1$, respectively. We notice that the controller becomes more aggressive for larger $\gamma$  and trajectories converge to the origin faster. This is expected as the OCPs achieve optimal control solutions at an exponential rate for $\gamma > 0$ for which the closed-loop system converges faster than uniform stability.
On the other hand, we observe that, for negative discount factor $\gamma < 0$, the control solution is not guaranteed to stabilize the system to the origin, and the closed-loop dynamics converge to a limit cycle as shown in Fig.~\eqref{E2L2_6} resulting from $\gamma = -5$. This is again expected as the cost function is decreasing exponentially, and even without the stabilizing feedback controller, the optimal cost function is finite.

\begin{figure}[h]
  \centering
  \includegraphics[width=.9\linewidth]{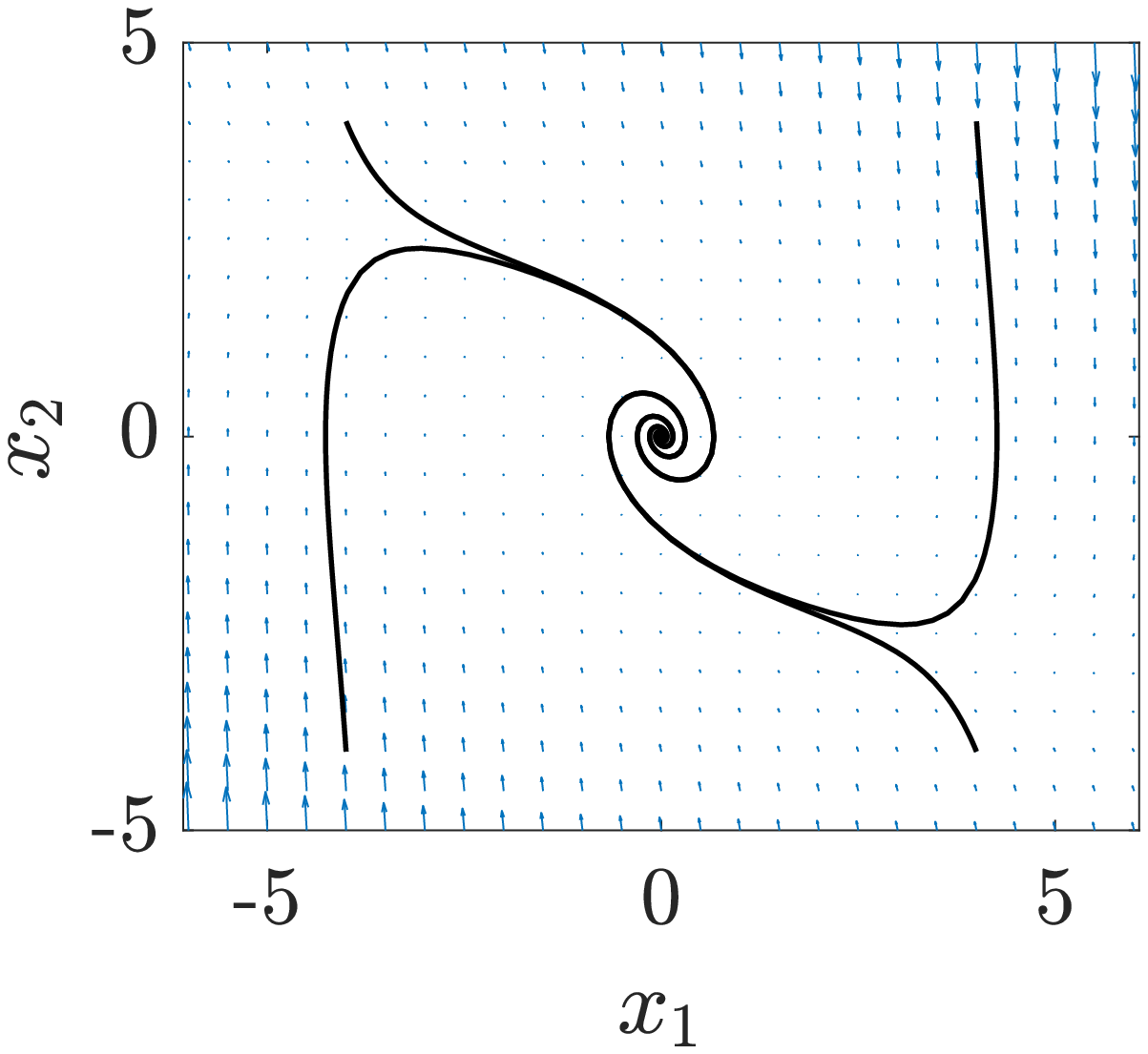}
\caption{Trajectories of closed-loop system with $\mathcal{L}_2$ OCP solution for $\gamma=0$ for Van der pol Oscillator.}
\label{E2L2_1}
\end{figure}

\begin{figure}[h]
  \centering
  \includegraphics[width=.9\linewidth]{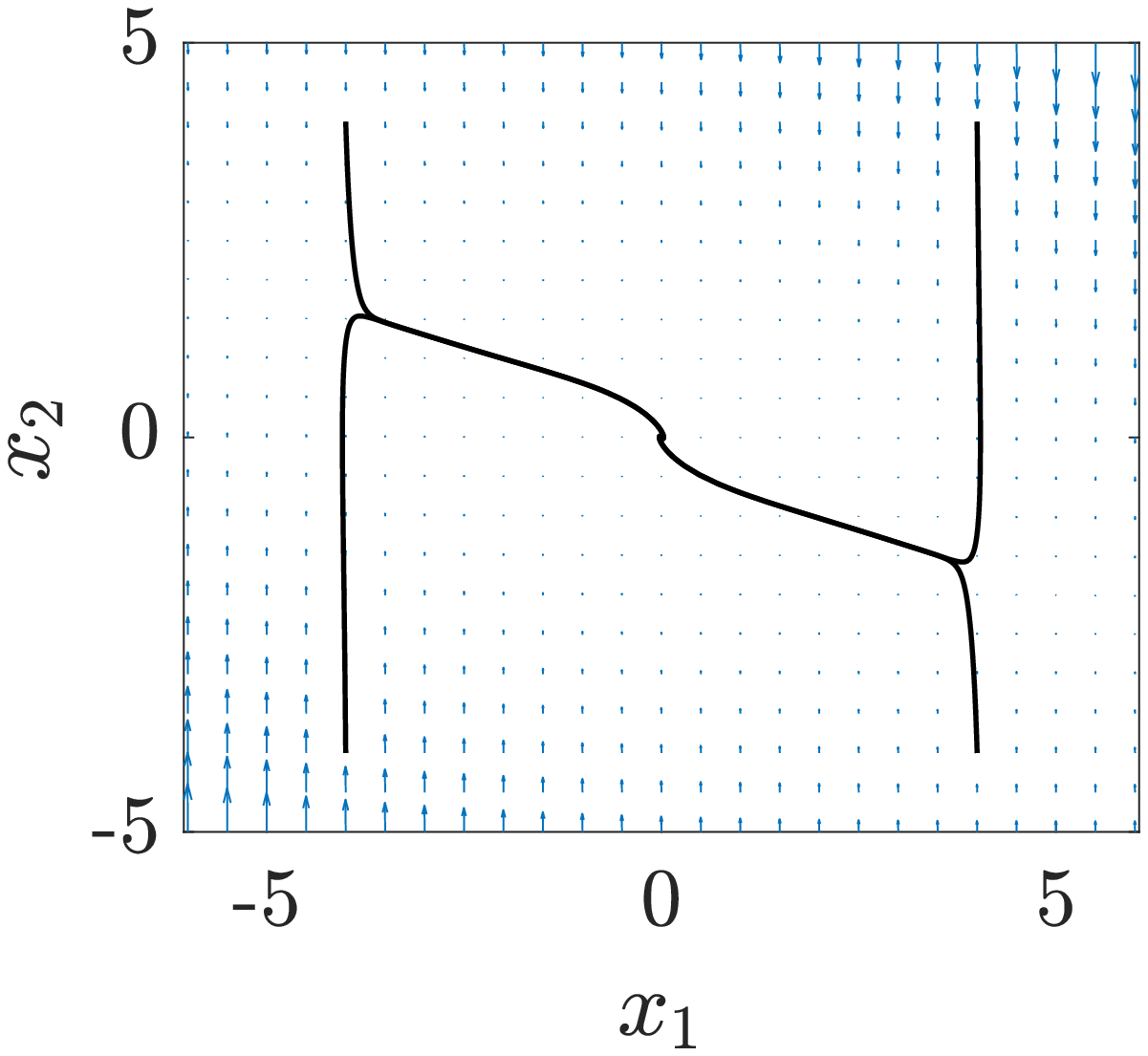}
\caption{Trajectories of closed-loop system with $\mathcal{L}_2$ OCP solution for $\gamma=1$ for Van der pol Oscillator.}
\label{E2L2_2}
\end{figure}




\begin{figure}[h]
  \centering
  \includegraphics[width=.9\linewidth]{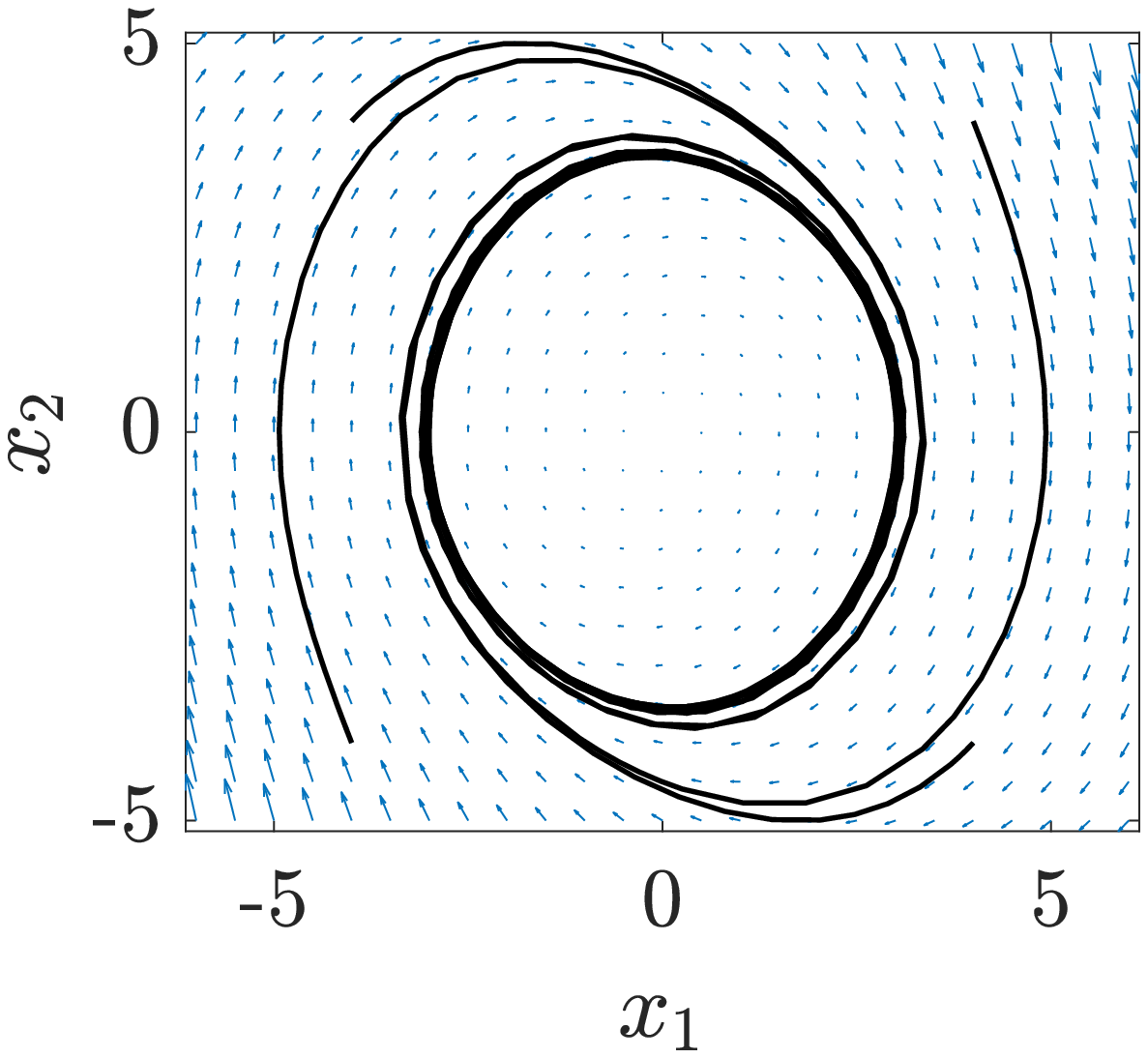}
\caption{Trajectories of closed-loop system with $\mathcal{L}_2$ OCP solution for $\gamma=-5$ for Van der pol Oscillator.}
\label{E2L2_6}
\end{figure}

\noindent\textbf{Example 3:} Consider the dynamics of controlled simple inverted pendulum involing nonpolynomial dynamics:
\begin{equation*}
\dot x_1=x_2,\;\;\dot x_2=-\sin x_1 - 0.2 x_2 + u
\end{equation*}
The number of data points used in the estimation of the Koopman operator equals $2 \times 10^4$. 
The simulation results for the ${\cal L}_2$ optimal control for different discount factor $\gamma$ are shown in Fig.\ref{E5L2_1} - Fig.\ref{E5L2_5}. Similar to Example 2, we notice that, for zero and positive discount factor $\gamma$, the controller obtained by positive discount factor can stabilize the origin at a faster rate than the case of $\gamma = 0$ whereas, for negative discount factor, the origin is not stabilized for the closed-loop system.

\noindent\textbf{Example 4:} Consider the  controlled Lorentz attractor:
\begin{equation*}
\dot x_1=\sigma(x_2-x_1),\dot x_2=x_1(\rho-x_3) - x_2+u,\dot x_3=x_1x_2 - \eta x_3,
\end{equation*}
where $\sigma=10$, $\rho = 28$, and $\eta = \frac{8}{3}$. The open-loop dynamics of the Lorentz system with the above parameter values are chaotic. For this example, we provide simulation results with ${\cal L}_1$ optimal control with $\gamma=0$. We notice that the optimal control can stabilize the system.

\begin{figure}[h]
  \centering
  \includegraphics[width=0.9\linewidth]{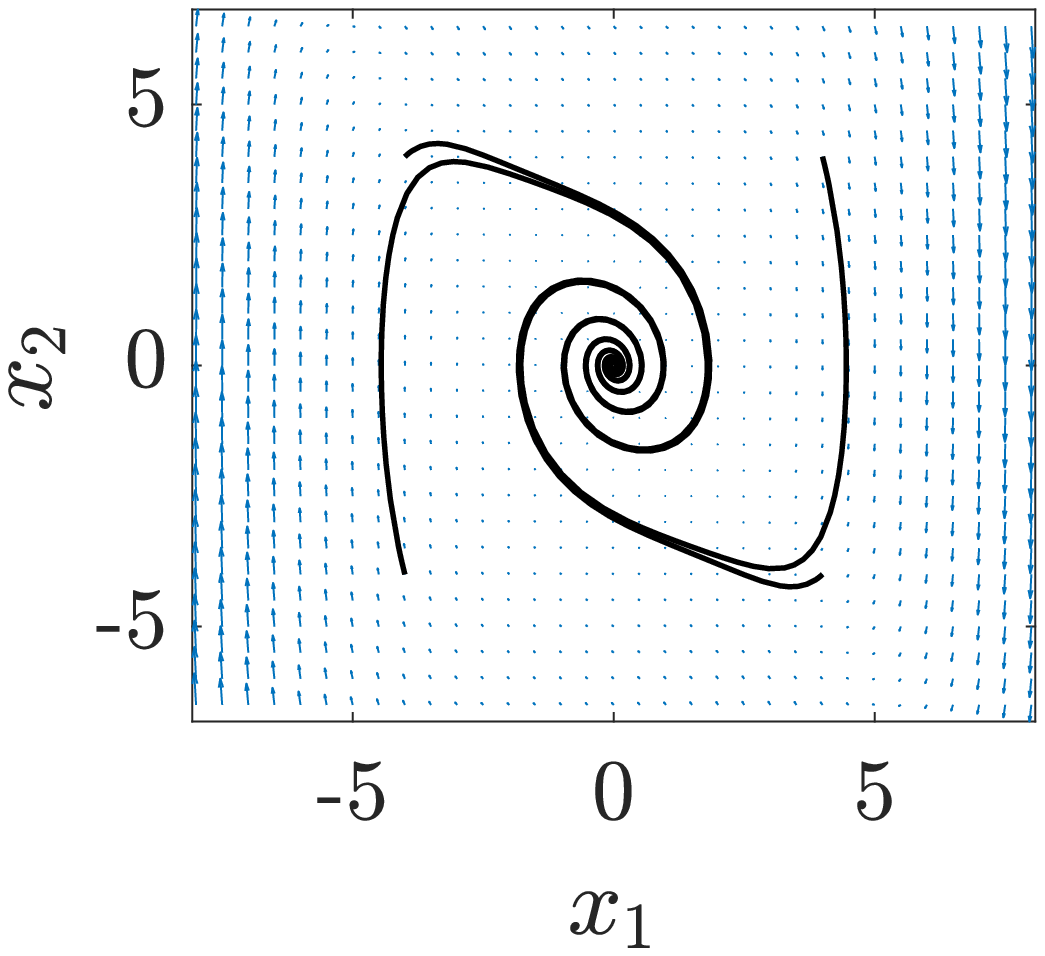}
\caption{$\mathcal{L}_2$ OCP results of simple inverted pendulum, showing converging state trajectories for $\gamma=0$.}
\label{E5L2_1}
\end{figure}

\begin{figure}[h]
  \centering
  \includegraphics[width=0.9\linewidth]{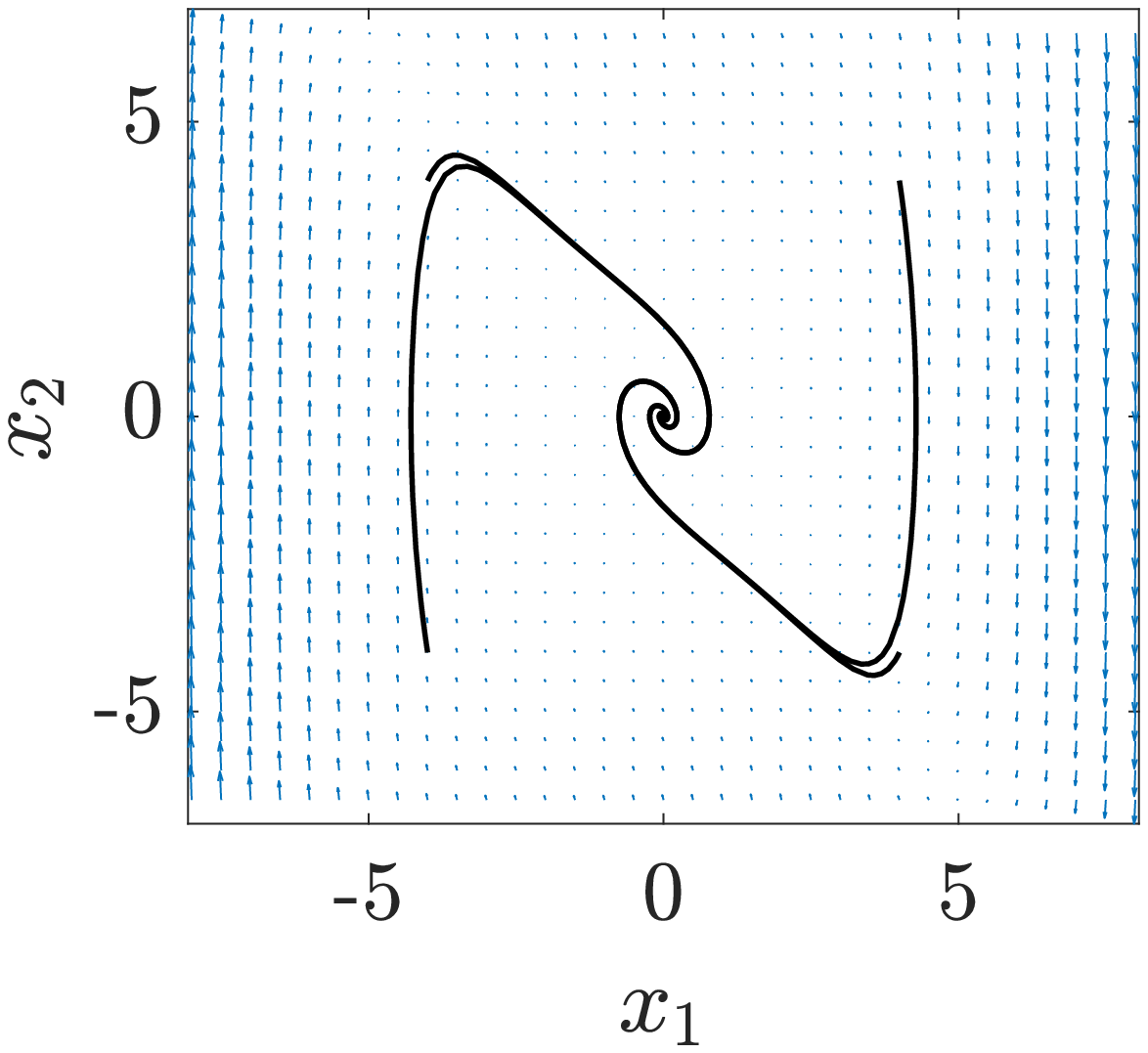}
\caption{$\mathcal{L}_2$ OCP results of simple inverted pendulum, showing converging state trajectories for $\gamma=2$.}
\label{E5L2_2}
\end{figure}



\begin{figure}[h]
  \centering
  \includegraphics[width=0.9\linewidth]{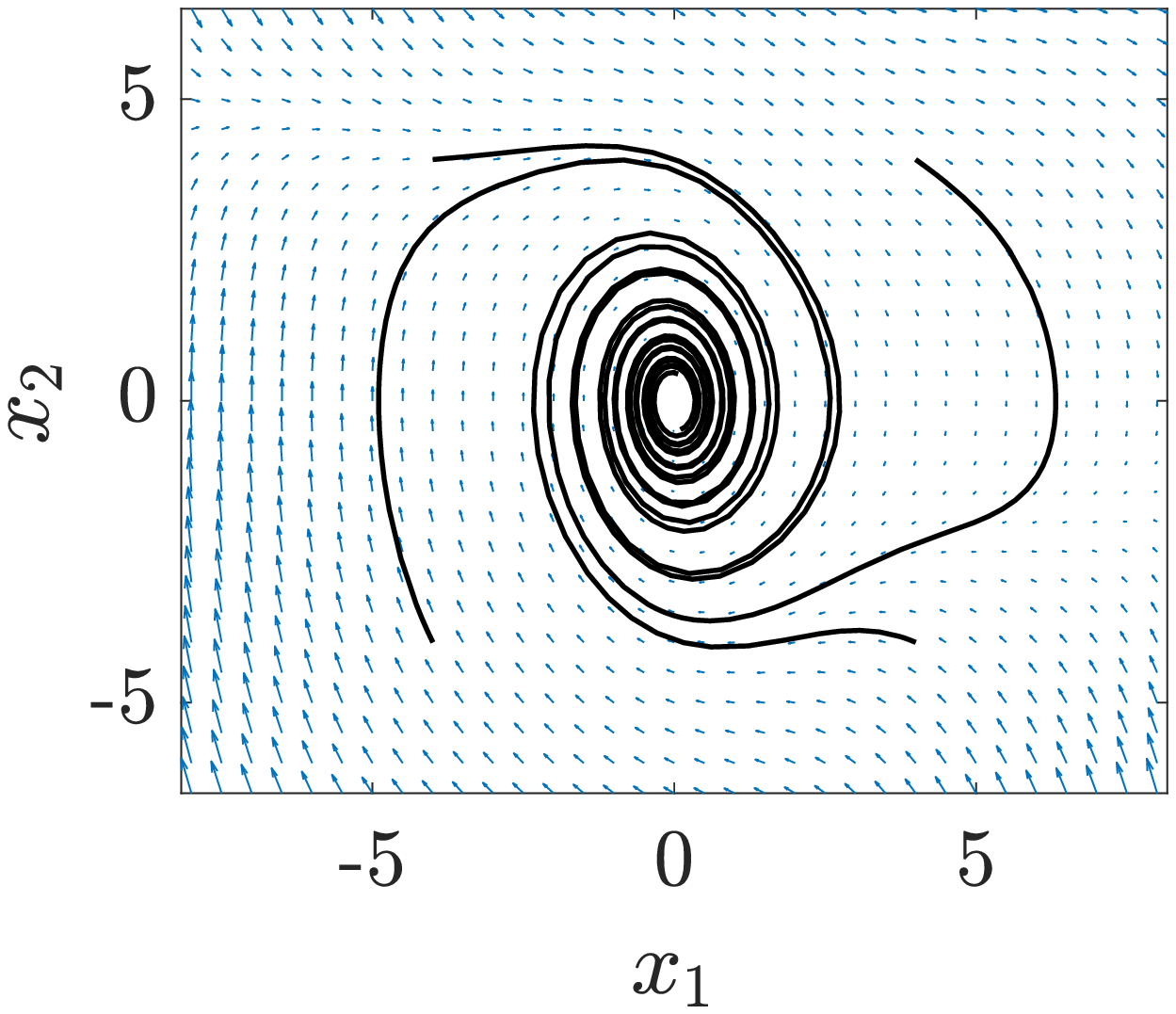}
\caption{$\mathcal{L}_2$ OCP results of simple inverted pendulum, showing converging state trajectories for $\gamma=-5$.}
\label{E5L2_5}
\end{figure}

\begin{figure}[h]
  \centering
  \includegraphics[width=1\linewidth]{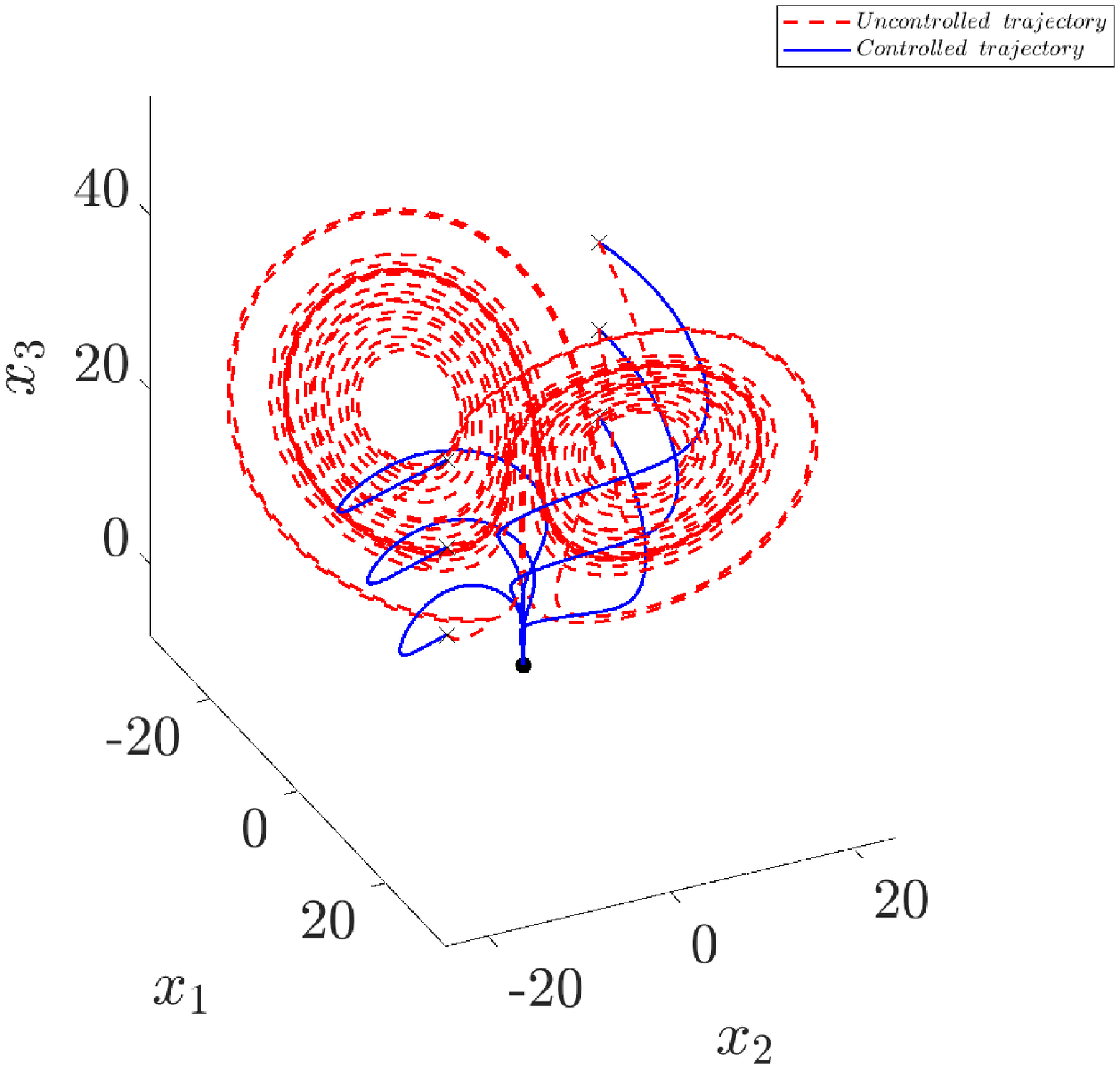}
\caption{$\mathcal{L}_1$ OCP results of Lorentz attractor, showing converging state trajectories for $\gamma=0$.}
\label{E4L1_1}
\end{figure}







\section{Conclusion}\label{sec:conclusion}
A systematic convex optimization-based framework is provided for optimal control of nonlinear systems with a discounted cost function. In contrast to the existing literature of OCP with discounted cost, we consider the OCP problem with both positive and negative discount factor and provide a condition for the existence of optimal control. The OCP is formulated in the dual space of density function as an infinite-dimensional convex optimization problem. A new data-driven algorithm is provided for the computation of optimal control combining methods from Sum-of-Squares optimization and data-driven approximation of linear transfer operators. Simulation results are presented to verify the developed framework on data-driven optimal control.


\section*{Appendix}

\textbf{ Proof of Theorem \ref{theorem_maingeometric}}\\
Consider the feedback control system
\[\dot \bx={\bf f}(\bx)+\bg(\bx)\bk(\bx)\]
and let $\mP^c_t$ and $\mU^c_t$ be the P-F and Koopman operator for the feedback control system. Using the definition of the Koopman operator, the cost in (\ref{ocp_main_discounted}) can be written as 
\[
J(\mu_0)=\int_{\bX_1}\int_0^\infty e^{\gamma t}[\mU_t^c (q+\bk^\top \bR\bk)](\bx)dt h_0(\bx)d\bx.
\]
Using the duality and linearity of the Koopman and P-F operators, we obtain
\[
J(\mu_0)=\int_{\bX_1}\int_0^\infty (q+\bk^\top \bR\bk)(\bx)e^{\gamma t}[\mP_t^c h_0](\bx)dt d\bx.
\]
Defining 
\begin{equation}\rho(\bx):=\int_0^\infty e^{\gamma t}[\mP_t^c h_0](\bx) dt, \label{definingrho}
\end{equation} 
the $J(\mu_0)$ can be written as 
\begin{equation}
J(\mu_0)=\int_{\bX_1} (q(\bx)+\bk(\bx)^\top \bR\bk(\bx)) \rho(\bx)d\bx.\label{ocp_costdiscountproof}
\end{equation}
Defining $\bar \brho(\bx):=\bk(\bx)\rho(\bx)$, the cost function can be written in the form given in \eqref{eqn_ocpdiscount_L2}.
We next show that $\rho(\bx)$ and $\bar \brho$ satisfies the constraints in \eqref{eqn_ocpdiscount_L2}. Following Assumptions \ref{assumption_onocp}, we know that the state cost $q$ is uniformly bounded away from zero in $\bX_1$ and optimal cost function is finite and hence we have
\begin{equation}\infty>J(\mu_0)\geq \int_{\bX_1}q(\bx)\rho(\bx)d\bx\geq c\int_{\bX_1}\rho(\bx)d\bx\label{ss}
\end{equation}
where $c$ is the lower bound for the state cost function $q(\bx)$ on $\bX_1$. The above proves that there exists a constant $M$ such that 
\begin{eqnarray}
\int_{\bX_1}\rho(\bx)d\bx=\int_{\bX_1}\int_0^\infty e^{\gamma t}[\mP_t^c h_0](\bx)dt d\bx\leq M.\label{before_claim}
\end{eqnarray}
We next claim that
\begin{eqnarray}
\lim_{t\to \infty}e^{\gamma t}[\mP_t h](\bx)=0\label{claim}
\end{eqnarray}
for $\mu_0$ almost all $\bx\in \bX_1$. Using Barbalat Lemma, we know that for $f(t)\in {\cal C}^1$, and $\lim_{t\to \infty} f(t)=\alpha$. If $f'(t)$ is uniformly continuous, then $\lim_{t\to \infty} f'(t)=0$.\\
Letting $f(t)=\int_0^t e^{\gamma \tau}\int_{\bX_1}[\mP^c_\tau h_0](\bx)d\bx d\tau$ and hence $f'(t)=e^{\gamma t} \int_{\bX_1}[\mP_t h_0](\bx)d\bx$. By Barbalat Lemma, since $e^{\gamma t}\int_{\bX_1}[\mP^c_t h_0](\bx)d\bx$  is uniformly continuous w.r.t. time for  $\gamma\leq 0$. The uniformly continuity follows from the definition of the P-F semi-group and the fact that solution of dynamical system is uniformly continuous w.r.t. time. We have 
\[0=\lim_{t\to \infty}e^{\gamma t}\int_{\bX_1}[\mP^c_t h_0](\bx)d\bx=\lim_{t\to \infty}\int_{\bX_1}e^{\gamma t}[\mP_t^c h_0](\bx)d\bx\]
which implies 
\begin{equation}\lim_{t\to \infty}e^{\gamma t}[\mP_t^c h_0](\bx)=0
\label{convergence}
\end{equation}
for a.e. $\bx$ w.r.t. $\mu_0$ in $\bX_1$.
Next, we claim that $\rho(\bx)$ as defined in (\ref{definingrho}) can be obtained as a solution of the following equation
\begin{equation}
\nabla\cdot (({\bf f}(\bx)+\bg(\bx)\bk(\bx))\rho(\bx))=\gamma \rho(\bx)+h(\bx),\label{steady_pdegeometric}
\end{equation}
for $x\in \bX_1$.
Substituting (\ref{definingrho}) in (\ref{steady_pdegeometric}), we obtain
\begin{eqnarray}
\nabla\cdot ({\bf f}_c \rho)=\int_0^\infty \nabla\cdot({\bf f}_c (\bx)e^{\gamma t}[\mP^c_t h_0](\bx))dt\nonumber\\
=\int_0^\infty -e^{\gamma t}\frac{d}{dt}[\mP^c_t h_0](\bx)dt\nonumber\\
-e^{\gamma t} [\mP_t^c h_0](\bx)|_{t=0}^\infty+\int_0^\infty \gamma e^{\gamma t} [\mP_t^c h_0](\bx) dt\nonumber\\
=h_0(\bx)+\gamma \rho(\bx)\label{eq22}
\end{eqnarray}
In deriving (\ref{eq22}) we have used the infinitesimal generator property of P-F operator Eq. (\ref{PF_generator}) and the fact that $\lim_{t\to \infty} e^{\gamma t}[\mathbb{P}_t^c h_0](\bx)=0$  following (\ref{convergence}).
Furthermore, since $h_0(\bx)> 0$, it follows that $\rho(\bx)> 0$ from the positivity property of the P-F operator. Combining (\ref{ocp_costdiscountproof}) and (\ref{eq22}) along with the definition of $\bar \rho$, it follows that the OCP problem can be written as  convex optimization problem (\ref{eqn_ocpdiscount}). The optimal control $\bk^\star(\bx)$ obtained as the solution of optimization problem (\ref{eqn_ocpdiscount}) is  a.e. uniform stable (for $\gamma=0$) follows from the results of Theorem \ref{theorem_necc_suffuniform} using the fact that closed loop system satisfies (\ref{steady_pdegeometric}) with $\rho$ that is integrable. The optimal solution  $\rho^\star(\bx)\in {\cal L}_1(\bX_1)\cap {\cal C}^1(\bX_1,\mR_{\geq 0})$ follows from the fact that $h_0\in {\cal L}_1(\mR^n,\mR_{> 0})\cap {\cal C}^1(\mR^n)$  and the definitions of $\rho$ (\ref{definingrho}) and P-F operator (\ref{pf-operator}). \\

\textbf{ Proof of Theorem \ref{theorem_maingeometric_positivediscount}} The proof of this theorem follows exactly along the lines of proof of Theorem \ref{theorem_maingeometric} until equation (\ref{before_claim}). Unlike the proof of Theorem \ref{theorem_maingeometric}, where the claim (\ref{claim}) is proved using Barbalat Lemma, in this proof $\lim_{t\to \infty}[\mP_t h_0](\bx)=0$ for $\mu_0$ almost all $\bx \in \bX_1$ follows from Assumption \ref{assumption_ocppositivediscount}. As $\mu_0(B_t)\leq M e^{-\gamma' t}$ implies $e^{\gamma t}[\mP_t h_0](\bx)\to 0$ for $\mu_0$ a.e. $\bx\in \bX_1$. The rest of the proof then follows again along the lines of proof of Theorem \ref{theorem_maingeometric}. 




\bibliographystyle{apalike}
\bibliography{refs,ref,ref1,autosam}
\end{document}